\documentclass{article}

\usepackage[utf8]{inputenc}
\usepackage{amsmath, amssymb, amsthm}
\usepackage{graphicx}
\usepackage{geometry}
\usepackage{tikz}
\usepackage{tikz-cd}
\usepackage{multirow}
\usepackage{hyperref}
\usepackage{cleveref}  % 在hyperref之后加载
\usepackage{forest}   
\usepackage{mathrsfs} 
\usepackage{setspace}
\title{Newton-Okounkov Bodies and Jet Separation: Canonical-Free and Multipoint Generalizations}
\author{Yi Lu\thanks{Yi Lu. Email: ylulu0610@gmail.com}\\
  \small School of Mathematical Sciences, Capital Normal University \\
  \small Department of Mathematical Sciences, University of Liverpool}
 
\date{\today}

\newcommand{\lltriangle}[1]{%
\Delta^{-1}_{#1}
}

\newtheorem{theorem}{Theorem}[section]
\newtheorem{lemma}[theorem]{Lemma}

\newtheorem{corollary}[theorem]{Corollary}
\theoremstyle{definition}
\newtheorem{definition}[theorem]{Definition}

\theoremstyle{remark}
\newtheorem{remark}[theorem]{Remark}
\theoremstyle{conjecture}

\newtheorem*{theorem*}{Theorem}

\crefname{theorem}{Theorem}{Theorems}
\crefname{lemma}{Lemma}{Lemmas}
\crefname{proposition}{Proposition}{Propositions}
\crefname{corollary}{Corollary}{Corollaries}
\crefname{definition}{Definition}{Definitions}
\crefname{example}{Example}{Examples}
\crefname{remark}{Remark}{Remarks}
\crefname{conjecture}{Conjecture}{Conjectures}

\crefformat{equation}{#2(#1)#3}
\Crefformat{equation}{Equation~#2(#1)#3}

\begin{document}
 
\maketitle
% \begin{abstract}
% We establish three generalizations of the Küronya-Lozovanu jet separation criterion using Newton-Okounkov bodies. The result states that if an inverted standard simplex of size \(n+k+\varepsilon\) is contained in all infinitesimal Newton-Okounkov bodies at a point \(x\), then \(K_X + D\) separates \(k\)-jets at \(x\). We prove: (1) a canonical-free version removing the \(K_X\) term at the cost of a computable multiple \(m(D)\); (2) a multipoint extension allowing simultaneous jet separation at finitely many prescribed points; and (3) a combination of both, providing a canonical-free multipoint criterion. Our proofs utilize Trusiani's multipoint Newton-Okounkov body framework combined with Nadel vanishing for asymptotic multiplier ideals. We conclude with explicit computations for a double cover of a product of elliptic curves, following an example of Kollár's, which is an excellent testing ground for the effectiveness of our criteria and for obtaining concrete numerical thresholds that relate the containment parameter to jet separation order.
% \end{abstract}
\begin{abstract}
We establish three generalizations of the Küronya-Lozovanu jet separation criterion via Newton-Okounkov bodies: if an inverted standard simplex of size $n+k+\varepsilon$ is contained in all infinitesimal Newton-Okounkov bodies at $x$, then $K_X+D$ separates $k$-jets at $x$. We prove (1) a canonical-free version with a computable multiple $m(D)$; (2) a multipoint extension for simultaneous jet separation; and (3) a combination of both. Proofs use Trusiani's framework and Nadel vanishing. We conclude with explicit computations for a double cover of a product of elliptic curves.
\end{abstract}

% \tableofcontents
% \newpage 
  % 在这里添加 section 重定义

% ========================================
% ★★★ SECTION 1: INTRODUCTION ★★★
% ========================================
\section{Introduction}
%-背景与研究动机
%--经典的NObody理论：
%Okounkov: \cite{Okounkov1996BrunnMinkowskiIF,  Okounkov2000WhyWM} 
%Lazarsfeld-Mustata: \cite{ASENS_2009_4_42_5_783_0}在这里面也给出infnitesimal Newton Okounkov body的构造
%Kaveh-Khovanskii:\cite{Kaveh2009NewtonOkounkovBS}
%将一个点的Newton-Okounkov body推广到多个点上：
% Shin: \cite{Shin2017ExtendedOB},
% Trusiani:\cite{Trusiani2018MultipointOB}
%--Jet Ampleness与Jet Separation:
% Beltrametti& Sommese关于k-jet ampleness的文献:\citeBeltramettiSommese1993}。有些论文称之为generation of jets/high order embedding。是globally generated = 0-jet， very ample = 1-jet的推广
% 关于jet separation可以在lazasfield Def 5.1.15\cite{lazasfield2004}看到，是jet ampleness在一点处的简化。
%--两个理论交汇点：
%Küronya-Lozovanu的两个工作\cite{KuronyaLozovanu2017}\cite{küronya2015infinitesimalnewtonokounkovbodiesjet}分别用Newton-Okounkov body和infinitesimal Newton-Okounkov body做了nef/ample的结果，其中\cite{küronya2015infinitesimalnewtonokounkovbodiesjet}通过infinitesimal Newton-Okounkov body在一个点上的jet separation。
%一个自然的想法是将一个点的jet separation推广到预先给定多个点上的jet separation的criterion，这也正是本文的主要工作。在文章\cite{Shin2017ExtendedOB}中也给出了多个点的jet separation的判定，然而这篇文章是基于另外一个multipoint Newton-Okounkov body的构造Trusiani:\cite{Trusiani2018MultipointOB}来进行推广的。
\subsection{Background and Motivation}
Throughout this paper, we work over $\mathbb{C}$, and all varieties $X$ are assumed to be irreducible, smooth, projective algebraic varieties of dimension $n$.

The theory of Newton-Okounkov bodies, introduced by \cite{Okounkov1996BrunnMinkowskiIF, Okounkov2000WhyWM}, \cite{ASENS_2009_4_42_5_783_0}, and \cite{Kaveh2009NewtonOkounkovBS}, provides a powerful tool for studying divisors on projective varieties through convex geometry. This construction associates to a divisor a convex body in Euclidean space that encodes asymptotic information about the corresponding linear series. A refinement of this theory is the infinitesimal Newton-Okounkov body, also developed by \cite{ASENS_2009_4_42_5_783_0}, which offers certain advantages such as independence from auxiliary flag choices in favorable situations. The extension from single-point to multipoint configurations has been pursued by \cite{Shin2017ExtendedOB} and \cite{Trusiani2018MultipointOB}, providing finer geometric invariants.

On the algebraic side, the notion of $k$-jet ampleness, systematically studied by \cite{BeltramettiSommese1993}, generalizes classical positivity conditions: $0$-jet ampleness corresponds to global generation, while $1$-jet ampleness corresponds to very ampleness. This concept is also referred to as generation of jets or high-order embeddings in the literature. A local variant is jet separation (see \cite[Definition 5.1.15]{lazasfield2004}), which requires that global sections separate jets at a given point up to a specified order, and can be viewed as a pointwise version of jet ampleness.

The connection between Newton-Okounkov bodies and positivity notions was established by \cite{KuronyaLozovanu2017, küronya2015infinitesimalnewtonokounkovbodiesjet}. In particular, \cite{küronya2015infinitesimalnewtonokounkovbodiesjet} provides a criterion for jet separation at a single point using infinitesimal Newton-Okounkov bodies: the inclusion of an appropriately sized inverted standard simplex guarantees the desired jet separation property.

\begin{theorem}[\cite{küronya2015infinitesimalnewtonokounkovbodiesjet}, Proposition 4.9]
If there exist a positive real number $\varepsilon$ and a natural number $k$ such that 
$$\Delta^{-1}_{n+k+\varepsilon} \subseteq \Delta_{Y'_\bullet}(\pi^*(D))$$
for every infinitesimal flag $Y'_\bullet$ over $x$, where $\Delta^{-1}$ is inverted standard simplex of side length $n+k+\epsilon $. Then $K_X + D$ separates $k$-jets.
\end{theorem}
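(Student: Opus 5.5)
The plan is the standard Nadel-vanishing argument. Let $\pi\colon X'=\mathrm{Bl}_x X\to X$ be the blow-up at $x$ with exceptional divisor $E\cong\mathbb{P}^{n-1}$. Recall that an infinitesimal flag $Y'_\bullet$ over $x$ has $Y'_1=E$ and a linear flag inside $E$. The valuation vectors of sections of $\pi^*D$ record, in their first coordinate, the order of vanishing along $E$, that is, the multiplicity at $x$. The remaining coordinates refine this by the flag inside $E$.

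It suffices to show $H^1(X, \mathcal{O}_X(K_X+D)\otimes \mathfrak{m}_x^{k+1})=0$. Then the restriction map $H^0(K_X+D)\to H^0(\mathcal{O}_X(K_X+D)\otimes \mathcal{O}_X/\mathfrak{m}_x^{k+1})$ is surjective, which is exactly separation of $k$-jets. The strategy is to produce an effective $\mathbb{Q}$-divisor $F\equiv \lambda D$ with $\lambda<1$ such that the multiplier ideal $\mathcal{J}(X,F)$ has $x$ as an isolated point of its cosupport and satisfies $\mathcal{J}(X,F)_x\subseteq \mathfrak{m}_x^{k+1}$. One then arranges equality near $x$, or handles extra cosupport via the usual ``isolated'' trick.

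\textbf{Step 1 (bigness and the local picture).} The simplex containment forces $D$ to be big. It also forces $x\notin \mathbf{B}_+(D)$, by the Küronya–Lozovanu characterization of the augmented base locus through infinitesimal Newton–Okounkov bodies (a nonempty small inverted simplex is contained in every infinitesimal body). Next, the containment of $\Delta^{-1}_{n+k+\varepsilon}$, which contains the vertex $(n+k+\varepsilon,0,\dots,0)$ direction and the whole facet $\{t_1=n+k+\varepsilon\}$ slice, together with closedness and convexity, gives the following: for $m$ divisible, there are sections of $mD$ vanishing to order at least $m(n+k+\varepsilon')$ at $x$. Moreover, the slice of $\Delta_{Y'_\bullet}(\pi^*D)$ at $t_1=n+k+\varepsilon'$ is a single point at the origin of the remaining coordinates. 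This holds because the inverted simplex picture says that $\pi^*D-tE$ restricted to $E$ is a multiple of the hyperplane class of size $t$ for $t\le n+k+\varepsilon$, so these sections are ``generic'' along $E$ and have no fixed tangent directions.

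\textbf{Step 2 (construct $F$).} Take $F=\frac{1}{m}\,\mathrm{div}(s)$, or better an average of several general such sections, scaled by $c=\frac{n+k}{n+k+\varepsilon'}<1$. Then $\mathrm{mult}_x F\ge n+k$, which gives $\mathcal{J}(F)\subseteq\mathfrak m_x^{k+1}$ (Lazarsfeld 9.3.2). To make $x$ isolated, use $x\notin\mathbf{B}_+(D)$: write $D\equiv A+N$ with $A$ ample and $x\notin \mathrm{Supp}\,N$. By choosing the sections among those vanishing to high order at $x$ but otherwise general, we get that away from $x$ the divisor $F$ has small multiplicity, so $\mathcal{J}(F)$ is trivial near $x$ except at $x$ itself. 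Alternatively, and more cleanly, use the asymptotic multiplier ideal $\mathcal{J}(c\,\|D\otimes \mathfrak m_x^{\,n+k+\varepsilon'}\|)$.

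\textbf{Step 3 (vanishing and conclusion).} Since $(1-c)D$ is big, and nef away from $\mathbf{B}_+$ after perturbation, Nadel vanishing gives $H^1(X,\mathcal O(K_X+D)\otimes\mathcal J(F))=0$. If the cosupport of $\mathcal J(F)$ near $x$ equals $\mathfrak m_x^{k+1}$ locally, or is contained in it with the quotient supported at $x$, the long exact sequence yields the surjection onto $k$-jets.

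The main obstacle is Step 2: turning the convex-geometric containment, which must hold for \emph{every} infinitesimal flag, into an honest divisor whose multiplier ideal is cosupported exactly at $x$ and contained in $\mathfrak m_x^{k+1}$. The point of requiring every flag is to rule out all tangent directions in $E$ being forced, and I expect to need the Lazarsfeld–Mustaţă description of slices via restricted volumes, $\mathrm{vol}_{X'|E}(\pi^*D-tE)$, to control this.
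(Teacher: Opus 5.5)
\textbf{There is a genuine gap, at the step you flagged yourself.} Your overall shape is fine: Nadel vanishing against an ideal contained in $\mathfrak{m}_x^{k+1}$ with $x$ an isolated point of its cosupport. But the step that carries all the content, isolating $x$, is not proved, and the reasons you give for it do not work.

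\begin{itemize}
\item Taking the sections ``otherwise general'' cannot remove base locus. Suppose every section of $mD$ vanishing to order $\ge m(n+k+\varepsilon')$ at $x$ also vanishes to high order along some curve through $x$ (equivalently, $\operatorname{Bs}|m(\pi^*D-tE)|$ meets $E$). Then every such $F$ has non-klt locus through $x$ that is larger than $\{x\}$.
\item The condition $x\notin\mathbf{B}_+(D)$ does not exclude this. It is equivalent to a \emph{small} inverted simplex lying in the infinitesimal bodies, so it says nothing about $\pi^*D-tE$ for $t$ near $n+k$.
\item The justifications in Step 1 are incorrect. For $\varepsilon'<\varepsilon$, the hypothesis itself puts a full $(n-1)$-dimensional simplex into the slice $\{t_1=n+k+\varepsilon'\}$, so that slice is not a point. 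Also, $(\pi^*D-tE)|_E$ is $t$ times the hyperplane class for \emph{every} $D$ and $t$, so it carries no information about base loci meeting $E$.
\item Step 3 is not a valid use of Nadel vanishing. The hypothesis is local at $x$ and does not make $D$, hence $(1-c)D$, nef. ``Nef away from $\mathbf{B}_+$'' is not the hypothesis of any vanishing theorem. You would have to add $(1-c)N$ to $F$ explicitly, which is harmless near $x$ since $x\notin\operatorname{Supp}N$.
\end{itemize}

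\textbf{The missing idea.} The paper argues this way in its proof of the multipoint theorem specialized to $N=1$, following Küronya--Lozovanu. You convert the containment into a statement about the \emph{restricted} base locus on $\overline{X}=\mathrm{Bl}_xX$, and use the asymptotic multiplier ideal instead of a single divisor.
\begin{enumerate}
\item Put $B=\pi^*D-(n+k)E$. The slice formula gives $\Delta_{Y'_\bullet}(B)=\Delta_{Y'_\bullet}(\pi^*D)_{t_1\ge n+k}-(n+k)e_1$. So $\Delta^{-1}_{n+k+\varepsilon}\subseteq\Delta_{Y'_\bullet}(\pi^*D)$ makes $B$ big and puts $\mathbf{0}\in\Delta_{Y'_\bullet}(B)$ for every infinitesimal flag.
\item Infinitesimal flags over $x$ are admissible flags on $\overline{X}$ centered at arbitrary points of $E$. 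The characterization ``$y\notin\mathbf{B}_-(B)$ iff $\mathbf{0}\in\Delta_{Y_\bullet}(B)$ for a flag centered at $y$'' then gives $\mathbf{B}_-(B)\cap E=\varnothing$.
\item Hence $\mathrm{Zeroes}\,\mathscr{J}(\overline{X},\|B\|)\cap E=\varnothing$. This is exactly where ``every flag'' is used, and it replaces your isolation step; restricted volumes are not needed.
\item Nadel vanishing for asymptotic multiplier ideals needs only $B$ big, which also removes the nefness problem. Since $K_{\overline{X}}+B=\pi^*(K_X+D)-(k+1)E$, the map $H^0(\overline{X},\pi^*(K_X+D))\to H^0(\pi^*(K_X+D)\otimes\mathcal{O}_{(k+1)E})$ is surjective. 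This is separation of $k$-jets at $x$.
\end{enumerate}

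Your divisor formulation can be recovered from this. Run the same argument with $B'=\pi^*D-(n+k+\varepsilon')E$ and take $F=\frac1p\pi_*F'$ for a general $F'\in|pB'|$. Then $\mathscr{J}(X,cF)=\mathfrak{m}_x^{k+1}$ near $x$ for $c=\frac{n+k}{n+k+\varepsilon'}$. This works only because of the $\mathbf{B}_-$ input above.
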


A natural question is whether this criterion extends to the multipoint setting, where one requires simultaneous jet separation at a prescribed finite set of points. This is the main focus of the present paper. We note that \cite{Shin2017ExtendedOB} also addresses multipoint jet separation criteria, but based on a different construction of multipoint Newton-Okounkov bodies. Our approach instead builds on Trusiani's framework \cite{Trusiani2018MultipointOB}, which is particularly well-suited for infinitesimal constructions.

% 介绍和大猜想之间的关系，引用相关文献。
The motivation for studying multipoint jet separation criteria via Okounkov bodies is closely related to classical questions about effective positivity of line bundles, for example Matsusaka's Big Theorem.

\begin{theorem}[Matsusaka's Big Theorem \cite{MR1898197}]
Let $L$ be an ample divisor and $B$ a nef divisor on a projective variety $X$ of dimension $n$. Set $\rho_L = (L^n)$, $\rho_K = (K_X \cdot L^{n-1})$, and $\rho_B = (B \cdot L^{n-1})$. Then there exists an integer $M_1 = M_1(\rho_L, \rho_K, \rho_B)$, depending only on these intersection numbers and the dimension $n$, such that $mL - B$ is very ample for all $m \geq M_1$.
\end{theorem}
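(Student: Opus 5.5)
We follow Siu's effective approach (as presented in \cite[\S10.2]{lazasfield2004}). The idea is to reduce very ampleness of $mL-B$ to two ingredients. The first is an effective very ampleness statement for adjoint bundles. The second is a numerical bigness criterion controlled purely by $\rho_L,\rho_K,\rho_B$. Throughout we use that $X$ is smooth, as assumed in the paper.

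\textbf{Step 1 (an auxiliary nef divisor).} By the cone theorem (Fujita's freeness-type statement), $H := K_X+(n+1)L$ is nef. Hence $B+H$ is nef. Moreover $((B+H)\cdot L^{n-1}) = \rho_B+\rho_K+(n+1)\rho_L$ is controlled by the given data.

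\textbf{Step 2 (effective adjoint very ampleness).} Invoke an Angehrn--Siu/Demailly type theorem. It gives an explicit $N=N(n)$ such that $K_X+P$ is very ample whenever $P-NL$ is nef. More precisely, one needs $P$ nef with $P^{d}\cdot V$ large on all subvarieties $V$; in the smooth case this is guaranteed by $P-NL$ nef. The proof of this ingredient is itself a Nadel vanishing argument on multiplier ideals with isolated cosupport. This is the same mechanism the present paper uses for jet separation, and we take it as given.

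\textbf{Step 3 (bigness via Siu's numerical criterion).} Recall the criterion: if $A,E$ are nef and $(A^n)>n\,(A^{n-1}\cdot E)$, then $A-E$ is big. Apply it with $A=mL$ and $E=B+H+NL$, which is nef. The inequality becomes
\[
m^n\rho_L \;>\; n\,m^{n-1}\bigl(\rho_B+\rho_K+(n+1+N)\rho_L\bigr),
\]
which holds for all $m > n(\rho_B+\rho_K+(n+1+N)\rho_L)/\rho_L$. For such $m$, the divisor $mL-B-H-NL$ is big. Consequently, some multiple $a(mL-B-H-NL)$ is effective, say linearly equivalent to $F\geq 0$.

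\textbf{Step 4 (conclusion, the main obstacle).} Write
\[
mL-B = K_X + \bigl((n+1+N)L + \tfrac{1}{a}F\bigr) + \text{(nef part)}.
\]
We want to apply Step 2 with the $\mathbb{Q}$-divisor $P=(n+1+N)L+\tfrac1aF+\dots$. The difficulty is that the effective part $F$ may have singularities, so $\mathcal{J}(\tfrac1a F)$ need not be trivial. Very ampleness (separation of points and of $1$-jets) might then fail along the cosupport of $\mathcal{J}(\tfrac1aF)$.

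Our first attempt would be to replace $F$ by $\tfrac{1}{ka}F_k$, where $F_k$ comes from a larger multiple and $k\gg0$. To make this effective, we would use Siu's criterion a second time with a doubled margin. This shows that $m'L-B-\dots$ is big for $m'\le m/2$, which lets us dilute the coefficients of the effective part uniformly. The number of dilution steps is controlled by $\rho_L$, and from this we obtain an explicit $M_1$.

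If the dilution does not close up, we would fall back on Siu's alternative. That is, we bound the base locus of $|m_0L-B|$ by degree considerations against $L$, and then use Castelnuovo--Mumford regularity with respect to the very ample $K_X+NL$ to pass from global generation to very ampleness.

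This base-locus/multiplier-ideal control in Step 4 is where we expect the real work to lie. All other steps are formal once the two cited effective results are in hand.
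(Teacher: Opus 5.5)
The paper gives no proof of this statement; it is quoted from \cite{MR1898197} purely as motivation. So your proposal has to stand on its own, and as written it does not. Steps~1--3 are fine, but your Step~4, which you rightly flag as the crux, contains no working argument.

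Write $P:=B+H=B+K_X+(n+1)L$. This is a nef class with $(P\cdot L^{n-1})=\rho_B+\rho_K+(n+1)\rho_L$. After Step~3 you only know that $G:=(m-N)L-P$ (your $mL-B-H-NL$) is big, with some $F\in|aG|$ for an uncontrolled $a$. Hence $mL-B\equiv K_X+(n+1+N)L+\tfrac{1}{a}F$.

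Your Step~2 does not apply to this decomposition, because $(m-N)L-B-K_X\equiv(n+1)L+\tfrac{1}{a}F$ need not be nef. A log version of Step~2 would need $(X,\tfrac{1}{a}F)$ to be klt near the points to be separated, and nothing gives you that.

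The dilution cannot supply it either. If the only nef classes at your disposal are nonnegative combinations of $L$, $B$, $H$, then every regrouping leaves the effective part carrying $-P$ with coefficient at least one. In particular, the ``doubled margin'' only writes $G$ as a big class plus $\tfrac{m}{2}L$. That adds an ample summand but dilutes nothing. So passing to $\tfrac{1}{ka}F_k$ with $F_k\in|kaG|$ gets you at best down to $\mathcal{J}(\|G\|)$. Nothing in your data bounds the asymptotic orders of vanishing of $G$ below the klt threshold. These orders are positive along $\mathbf{B}_-(G)$, which is nonempty unless $G$ is nef. The claim that the number of dilution steps is controlled by $\rho_L$ is unsupported.

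The fallback is no better. Bounding the base locus ``by degree considerations'' is precisely the missing content. Castelnuovo--Mumford regularity with respect to $K_X+NL$ would require $H^i(X,\mathcal{O}_X(mL-B-i(K_X+NL)))=0$, which reintroduces $-B-iK_X$ and the same difficulty.

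What is missing is an \emph{effective nefness} statement, which makes the effective divisor disappear altogether. Suppose $qL-P$ is nef for all $q\ge q_0(n,\rho_L,\rho_K,\rho_B)$. Then $mL-B=K_X+NL+\bigl((m-N+n+1)L-P\bigr)$ has nef last summand for $m\ge q_0+N-n-1$, and your Step~2 finishes the proof. This reduction asks for nothing false, since effective nefness follows from the theorem applied with $P$ in place of $B$. If you are unsure that a single-$K_X$ form of Step~2, including separation of tangent vectors, is available, use the standard $2K_X$ form and take $P=B+2H$; nothing else changes.

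Proving effective nefness is where the real work lies. One route is to make Step~3 effective, i.e.\ to produce $F\in|a(q_1L-P)|$ with $a$ \emph{explicit}. A curve $C$ with $(qL-P)\cdot C<0$ and $q\ge q_1$ must lie in $\operatorname{Supp}F$. Every component $V$ of $F$ has coefficient at least one, so nefness of $P$ gives $(L^{n-2}\cdot P\cdot V)\le a\,q_1\,(L^{n-1}\cdot P)$, while $(L^{n-1}\cdot V)\ge 1$. Hence the numerical criterion applies on $V$ with an explicit threshold. One then descends in dimension down to curves, and needs an explicit multiple at each stage, since that is what bounds the coefficients of components from below.

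Without an argument of this kind, Steps~1--3 only show that $(m-N)L-P$ is big, which is far from very ampleness of $mL-B$.
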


In the toric setting, such positivity questions admit purely combinatorial answers in terms of polytope data: \cite{MR2273280} proved Fujita's very ampleness conjecture for toric varieties using combinatorial properties of associated polytopes, and \cite{MR4297848} characterized $k$-jet ampleness via intersection numbers with invariant curves and lattice lengths of polytope edges. Our work contributes to this philosophy by providing combinatorial criteria for jet separation—a local positivity notion—using the convex geometry of multipoint Newton-Okounkov bodies that determine for which values of $k$ a given line bundle $L$ generates $k$-jets at all prescribed points.

A key distinction of our work is that we provide criteria for $k$-jet ampleness directly in terms of the line bundle $L$ itself, rather than relying on the adjoint bundle $K_X + L$. Since global generation and very ampleness correspond to $0$- and $1$-jet ampleness respectively, our multipoint criterion provides a computational framework for testing such positivity phenomena via convex geometry.

\subsection{Main Results}

Our main results extend the jet separation criterion of \cite{küronya2015infinitesimalnewtonokounkovbodiesjet} in three directions: removing the canonical divisor, allowing multiple points, and combining both generalizations.

\begin{center}
\begin{tabular}{|c|c|c|}
\hline
& \textbf{Single Point} & \textbf{Multiple Points} \\
\hline
\textbf{With $K_X$} & \cite[Proposition 4.9]{küronya2015infinitesimalnewtonokounkovbodiesjet}   & \cref{prop:multipoint-NO-jet-separation} \\
\hline
\textbf{Without $K_X$} & \cref{thm:main_jet_separation} & \cref{thm:generalized_multipoint_jet_separation} \\
\hline
\end{tabular}
\end{center}

\begin{theorem*}[Canonical-Free Jet Separation, cf. \cref{thm:main_jet_separation}]
Let $X$ be an $n$-dimensional smooth projective variety over $\mathbb{C}$, $D$ an ample divisor on $X$, and $x \in X$ a point. Let $\pi: \overline{X} \to X$ be the blow-up at $x$, and let $\tilde{Y}_{\bullet}$ be an infinitesimal flag centered at $x$. There exists $m(D)$ (depending only on $D$) such that $m(D) \cdot D - K_X$ is ample. If for some positive integer $m$, there exist $\varepsilon > 0$ and $k \in \mathbb{N}$ such that 
$$\Delta^{-1}_{n+k+\varepsilon} \subseteq \Delta_{\tilde{Y}_{\bullet}}(m\pi^*(D))$$
for every infinitesimal flag $\tilde{Y}_{\bullet}$ over $x$, then $(m + m(D)) \cdot D$ separates $k$-jets at $x$.
\end{theorem*}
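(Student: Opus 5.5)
The plan is to reduce the statement to the Küronya–Lozovanu criterion (\cite[Proposition 4.9]{küronya2015infinitesimalnewtonokounkovbodiesjet}) by writing
\[
(m+m(D))D \;=\; K_X + \bigl(mD + (m(D)D - K_X)\bigr) \;=\; K_X + D',
\qquad D' := mD + A,\quad A := m(D)D - K_X,
\]
where $A$ is ample by the choice of $m(D)$. Then $(m+m(D))D$ separates $k$-jets at $x$ as soon as $K_X + D'$ does. By the quoted criterion, it is enough to show that $\Delta^{-1}_{n+k+\varepsilon} \subseteq \Delta_{\tilde Y_\bullet}(\pi^*D')$ for every infinitesimal flag $\tilde Y_\bullet$ over $x$. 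The existence of $m(D)$ itself is the standard fact that for ample $D$, $mD - K_X$ is ample for $m \gg 0$ (for instance by Serre, or by openness of the ample cone). This step is not an issue.

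The key step is monotonicity of infinitesimal Newton–Okounkov bodies under adding an ample, or more generally effective, class: $\Delta_{\tilde Y_\bullet}(\pi^*(mD)) \subseteq \Delta_{\tilde Y_\bullet}(\pi^*(mD + A))$. I would prove it as follows.
\begin{itemize}
\item By homogeneity and continuity of Newton–Okounkov bodies in the big cone, we may replace $A$ by a rational multiple and assume some multiple $pA$ is very ample.
\item Since the valuation is determined by the flag over $x$, we can choose a section $s_A \in H^0(X, pA)$ with $s_A(x) \neq 0$. Then $\pi^* s_A$ does not vanish along the exceptional divisor $E$ or at the point of the flag, so its valuation vector is $\nu_{\tilde Y_\bullet}(\pi^* s_A) = 0$.
\item For every section $s \in H^0(\overline X, \pi^*(\ell p\, mD))$ we then have $\nu(s \cdot \pi^* s_A^{\ell}) = \nu(s)$. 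Hence the graded semigroup of $p\,mD$ embeds into that of $p(mD + A)$ with the same valuation vectors and the same degrees.
\item Taking closed convex hulls and rescaling gives $\Delta_{\tilde Y_\bullet}(\pi^* mD) \subseteq \Delta_{\tilde Y_\bullet}(\pi^*(mD + A))$.
\end{itemize}

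If the paper's convention normalizes infinitesimal bodies through the first coordinate recording the order along $E$, the same argument still works, because multiplying by $\pi^* s_A^\ell$ does not change the order of vanishing along $E$ or along the subsequent flag members.

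Combining, $\Delta^{-1}_{n+k+\varepsilon} \subseteq \Delta_{\tilde Y_\bullet}(m\pi^*D) \subseteq \Delta_{\tilde Y_\bullet}(\pi^* D')$ for every infinitesimal flag over $x$. Applying \cite[Proposition 4.9]{küronya2015infinitesimalnewtonokounkovbodiesjet} to the ample divisor $D'$ shows that $K_X + D' = (m+m(D))D$ separates $k$-jets at $x$.

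The main obstacle is the inclusion step. It needs care with $\mathbb{R}$- or $\mathbb{Q}$-coefficients and with the requirement that the section of $pA$ be nonvanishing at $x$, which is where ampleness of $A$, not mere bigness, is used. If the paper's version of the Küronya–Lozovanu criterion also requires $D'$ to be ample, or the body to be computed for an integral divisor, that requirement is met automatically, since $D'$ is a sum of ample divisors.
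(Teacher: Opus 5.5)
Your proposal is correct and follows essentially the same route as the paper: the same decomposition $(m+m(D))D = K_X + (mD + A)$ with $A = m(D)D - K_X$ ample, the inclusion $\Delta_{\tilde{Y}_\bullet}(m\pi^*D)\subseteq\Delta_{\tilde{Y}_\bullet}(\pi^*(mD+A))$ for every infinitesimal flag over $x$, and then the K\"uronya--Lozovanu criterion, which the paper invokes through its multipoint version with $N=1$. The only difference is how the inclusion is justified: the paper cites superadditivity $\Delta_{\tilde{Y}_\bullet}(\pi^*A)+\Delta_{\tilde{Y}_\bullet}(m\pi^*D)\subseteq\Delta_{\tilde{Y}_\bullet}(\pi^*(A+mD))$ together with $\mathbf{0}\in\Delta_{\tilde{Y}_\bullet}(\pi^*A)$ for ample $A$, whereas you prove it directly by multiplying by powers of a section of $pA$ that does not vanish at $x$, which is the same mechanism written out explicitly.
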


The above result addresses jet separation at a single point. We now extend this to the multipoint setting.

For a big divisor $D$ on $X$ and a 0-cycle $Z = \{x_1, \ldots, x_N\}$ of distinct smooth points, let $\pi: \overline{X} = \mathrm{Bl}_{Z}(X) \to X$ be the blow-up along $Z$. For each point $x_j$, let $\tilde{Y}^{(j)}_{\bullet}$ be an infinitesimal flag over $x_j$, and denote ${\tilde{Y}_{\bullet}}=({\tilde{Y}_\bullet^{(1)}},...,{\tilde{Y}_\bullet^{(N)}})$. Define
\[
V_{k,j} = \{ s \in H^0(X, kD) \setminus \{0\} : \nu_{\tilde{Y}^{(j)}_\bullet}(s) < \nu_{\tilde{Y}^{(i)}_\bullet}(s) \text{ for all } i \neq j \}.
\]
The \emph{multipoint infinitesimal Newton-Okounkov body} at $x_j$ is
\[
\Delta_{\tilde{Y}_\bullet,j}(D) = \overline{\bigcup_{k \geq 1} \frac{\nu_{\tilde{Y}^{(j)}_\bullet}(V_{k,j})}{k}} \subset \mathbb{R}^n
.
\] 

With this notion, we obtain the following multipoint jet separation criterion involving the canonical divisor.

\begin{theorem*}[Multipoint Jet Separation, cf. \cref{prop:multipoint-NO-jet-separation}]
Let $X$ be an $n$-dimensional smooth projective variety, $D$ an ample divisor, and $Z = \{p_1, \ldots, p_N\}$ a 0-cycle of distinct smooth points on $X$. Let $\pi: \overline{X} = \mathrm{Bl}_{Z}(X) \to X$ be the blow-up along $Z$. If there exist $\varepsilon > 0$ and $k \in \mathbb{N}$ such that 
$$\Delta_{n+k+\varepsilon}^{-1} \subseteq \Delta_{\tilde{Y}_\bullet,j}(\pi^*(D))$$
for all $j = 1, \ldots, N$ and every tuple of infinitesimal flags $\tilde{Y}_\bullet = (\tilde{Y}^{(1)}_\bullet, \ldots, \tilde{Y}^{(N)}_\bullet)$ with $\tilde{Y}^{(i)}_\bullet$ over $p_i$, then $K_X + D$ separates $k$-jets at $p_1, \ldots, p_N$ simultaneously.
\end{theorem*}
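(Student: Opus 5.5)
The plan follows the single-point argument of \cite[Proposition 4.9]{küronya2015infinitesimalnewtonokounkovbodiesjet} and runs it at all points at once. The target is a divisor $E$ with log canonical threshold behaviour at each $p_j$ as follows:
\[
\mathcal{J}(X, c\|D\|)\ \text{(or } \mathcal{J}(X,E)\text{ for a suitable } \mathbb{Q}\text{-divisor } E \equiv (1-\delta)D) \text{ satisfies } \mathcal{J}_{p_j} \subseteq \mathfrak{m}_{p_j}^{k+1} \text{ and } p_j \text{ is isolated in the cosupport.}
\]
Given this, Nadel vanishing gives $H^1(X, \mathcal{O}_X(K_X+D)\otimes \mathcal{J})=0$. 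The surjection $\mathcal{O}_X/\mathcal{J} \to \bigoplus_j \mathcal{O}_X/\mathfrak{m}_{p_j}^{k+1}$ then forces $H^0(K_X+D)\to \bigoplus_j H^0(\mathcal{O}_X(K_X+D)\otimes\mathcal{O}/\mathfrak{m}_{p_j}^{k+1})$ to be surjective, which is simultaneous $k$-jet separation.

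The key step is to produce, for each $j$, sections with prescribed large vanishing at $p_j$ and controlled behaviour at the other points, using the multipoint bodies $\Delta_{\tilde{Y}_\bullet,j}$.

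1. Since $\Delta^{-1}_{n+k+\varepsilon}\subseteq\Delta_{\tilde Y_\bullet,j}(\pi^*D)$ for every flag over $p_j$, the proof of \cite{küronya2015infinitesimalnewtonokounkovbodiesjet} (via their characterization of inverted simplices in infinitesimal bodies) shows that the valuative points near the vertex at height $n+k+\varepsilon$ are realized. For $\ell\gg0$ this yields sections $s\in V_{\ell,j}$ with $\mathrm{ord}_{p_j}(s)\ge \ell(n+k+\varepsilon')$.

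2. Membership in $V_{\ell,j}$ requires $\nu_{\tilde Y^{(j)}}(s)<\nu_{\tilde Y^{(i)}}(s)$ for $i\neq j$. Since the first coordinate of the infinitesimal valuation is the order of vanishing at the point, $s$ vanishes at each $p_i$ to order at least $\ell(n+k+\varepsilon')$ as well.

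3. Taking a general combination of several such sections gives a divisor $E=\frac{1}{\ell}\operatorname{div}(s)\equiv D$ with $\mathrm{mult}_{p_i}E\ge n+k+\varepsilon'$ at every $p_i$. This gives $\mathcal{J}(X,(1-\delta)E)\subseteq\mathfrak{m}_{p_i}^{k+1}$ by the standard multiplicity bound $\mathrm{mult}\ge n+k$.

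The main obstacle is isolatedness. One has to ensure the cosupport of $\mathcal{J}$ near each $p_i$ is exactly $\{p_i\}$, and does not contain curves through $p_i$ or other base components. In the single-point case this is where \emph{every} infinitesimal flag is used. Every direction in $\mathbb{P}(T_{p_i}X)$ gives a flag, and the full inverted simplex forces the exceptional divisor of $\mathrm{Bl}_{p_i}$ to dominate while $\pi^*D - (n+k+\varepsilon)E_i$ stays big with no base locus meeting $E_i$. I would first try to reproduce this at each $p_i$ separately, using the flags over $p_i$ in $\Delta_{\tilde Y_\bullet,i}$. I would then argue that the multipoint ordering condition only restricts to a subset of sections, so $\Delta_{\tilde Y_\bullet,i}(\pi^*D)\subseteq \Delta_{\tilde Y^{(i)}_\bullet}(\pi^*D)$. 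Hence the single-point conclusion, that $p_i$ is not in $\mathbf{B}_+(\pi^*D-(n+k+\varepsilon)E_i)$ restricted appropriately, still holds.

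If combining across points is problematic, a fallback avoids taking one divisor for all points. Use asymptotic multiplier ideals of the graded family $\{\pi_*\mathcal{O}(\ell\pi^*D-\sum_i \ell(n+k+\varepsilon')E_i)\}$. Its asymptotic multiplier ideal near $p_i$ agrees locally with the single-point one by the above, so it is contained in $\mathfrak{m}_{p_i}^{k+1}$ with $p_i$ isolated. Nadel vanishing for asymptotic multiplier ideals then finishes the argument as in the first paragraph.
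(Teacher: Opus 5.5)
Your fallback has the same setup as the paper: Nadel vanishing for $\mathcal{J}(\overline{X},\|B\|)$ with $B=\pi^*D-(n+k)\mathbb{E}$ and $\mathbb{E}=\sum_i E_i$. The gap is in the step that makes this work, the isolatedness you flag as the main obstacle.

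You pass to single-point bodies via $\Delta_{\tilde Y_\bullet,i}(\pi^*D)\subseteq\Delta_{\tilde Y^{(i)}_\bullet}(\pi^*D)$ \emph{before} subtracting any exceptional divisor. That yields only $\mathbf{B}_-(\pi^*D-(n+k)E_i)\cap E_i=\varnothing$, in which the other $E_{i'}$ are not subtracted. (The origin detects $\mathbf{B}_-$, not $\mathbf{B}_+$.) Simultaneous separation needs this statement for $B$, whose linear series is much smaller. Your claim that the asymptotic multiplier ideal of $\{\ell\pi^*D-\ell c\,\mathbb{E}\}$ ``agrees locally with the single-point one'' is false in general.

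Take $X=\mathbb{P}^2$, $D=\mathcal{O}(3)$, two points $p_1,p_2$, and $k=0$. Each single-point body is $\Delta^{-1}_3\supseteq\Delta^{-1}_{2+\varepsilon}$, and $\pi^*D-2E_i$ is nef on $\overline{X}$. Yet $B\cdot\tilde L=-1$ for the strict transform $\tilde L$ of the line through $p_1,p_2$. So $\tilde L\subseteq\mathbf{B}_-(B)$, and $\tilde L$ meets $E_1$ and $E_2$. Indeed $K_X+D=\mathcal{O}_X$ does not separate $0$-jets at $\{p_1,p_2\}$.

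Moreover, by B\'ezout every curve in $|3\ell H|$ with multiplicity $\ge(2+\varepsilon')\ell$ at both points contains that line with multiplicity $\ge(1+2\varepsilon')\ell$. So high multiplicity at all points, which is all your Steps 1--3 deliver, is compatible with $\mathcal{J}((1-\delta)E)$ vanishing along a curve through the $p_i$. Isolatedness therefore cannot come from single-point information. It must come from the multipoint hypothesis, which in this example fails for the flag at $p_1$ pointing along the line.

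The missing step is to subtract \emph{inside the multipoint body}. This is the slice formula \cref{thm:okounkov-body-slice} with $\mathbb{D}=\mathbb{E}$, and its mechanism is exactly your Step~2. A section $s\in V_{\ell,j}$ with $\operatorname{ord}_{p_j}(s)\ge\ell t$ vanishes to order $\ge\ell t$ along every $E_i$, so it is a section of $\ell(\pi^*D-t\mathbb{E})$.

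Apply this to sections whose normalized valuations approach $(n+k)e_1$, not just to any point with large first coordinate. It shows that $B$ is big and that $\mathbf{0}\in\Delta_{\tilde Y_\bullet,j}(B)\subseteq\Delta_{\tilde Y^{(j)}_\bullet}(B)$. The inclusion into single-point bodies is used only after the subtraction.

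The centers of the flags over $p_j$ sweep out $E_j$. So \cref{thm:stable-base-locus-characterization} on $\overline{X}$ gives $\mathbf{B}_-(B)\cap E_j=\varnothing$, hence $\mathrm{Zeroes}\,\mathcal{J}(\overline{X},\|B\|)\cap\mathbb{E}=\varnothing$. Nadel vanishing on $\overline{X}$, with $K_{\overline{X}}+B=\pi^*(K_X+D)-(k+1)\mathbb{E}$, then gives surjectivity onto $\mathscr{Z}\oplus\bigoplus_i\mathcal{O}_{(k+1)E_i}$. The disjoint supports provide the splitting you need, and no multiplicity estimate is required.

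A smaller point: $\mathcal{J}(X,c\|D\|)=\mathcal{O}_X$ for $D$ ample and $c<1$, so the first option in your opening display gives nothing.
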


Finally, we present the canonical-free version for the multipoint case, which parallels the single-point result.

\begin{theorem*}[Canonical-Free Multipoint Jet Separation, cf. \cref{thm:generalized_multipoint_jet_separation}]
Let $X$ be an $n$-dimensional smooth projective variety over $\mathbb{C}$, $D$ an ample divisor on $X$, and $Z = \{p_1, \ldots, p_N\}$ a 0-cycle of distinct smooth points on $X$. There exists $m(D)$ (depending only on $D$) such that $m(D) \cdot D - K_X$ is ample. If for some positive integer $m$, there exist $\varepsilon > 0$ and $k \in \mathbb{N}$ such that 
$$\Delta_{n+k+\varepsilon}^{-1} \subseteq \Delta_{\tilde{Y}_\bullet,j}(m\pi^*(D))$$
for all $j = 1, \ldots, N$ and every tuple of infinitesimal flags $\tilde{Y}_\bullet = (\tilde{Y}^{(1)}_\bullet, \ldots, \tilde{Y}^{(N)}_\bullet)$ with $\tilde{Y}^{(i)}_\bullet$ centered at $p_i$, then $(m + m(D)) \cdot D$ separates $k$-jets at $p_1, \ldots, p_N$ simultaneously.
\end{theorem*}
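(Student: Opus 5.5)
The plan is to reduce the statement to the multipoint criterion with canonical divisor, \cref{prop:multipoint-NO-jet-separation}, by rewriting $(m+m(D))D$ as an adjoint bundle. Write
\[
(m+m(D))D = K_X + \bigl(mD + (m(D)D - K_X)\bigr),
\]
and set $A := m(D)D - K_X$, which is ample by the choice of $m(D)$ (such an integer exists since $D$ is ample, e.g.\ by Serre/Kleiman-type arguments). We then want to apply \cref{prop:multipoint-NO-jet-separation} to the ample divisor $D' := mD + A$. By that proposition, it suffices to show that
\[
\Delta^{-1}_{n+k+\varepsilon} \subseteq \Delta_{\tilde{Y}_\bullet,j}(\pi^*D')
\]
for every $j$ and every tuple of infinitesimal flags $\tilde Y_\bullet$.

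The key step is a monotonicity statement for multipoint infinitesimal bodies: if $A$ is ample (or merely effective with no base points at the $p_i$), then $\Delta_{\tilde{Y}_\bullet,j}(\pi^*(mD)) \subseteq \Delta_{\tilde{Y}_\bullet,j}(\pi^*(mD+A))$. I would argue this on the level of valuation vectors. Replacing $A$ by a multiple $rA$ with $rA$ very ample, and rescaling, one may choose a section $t\in H^0(X,rA)$ with $t(p_i)\neq 0$ for all $i$. Then $\nu_{\tilde Y^{(i)}_\bullet}(t)=0$ for every $i$. Multiplication by $t^{\ell}$ sends $V_{\ell r,j}(mD)$ into $V_{\ell r,j}(mD+A)$, since adding zero preserves the strict inequalities among the valuations and leaves $\nu_{\tilde Y^{(j)}_\bullet}(s)$ unchanged. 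Hence the valuation semigroups satisfy the inclusion along a cofinal subsequence of degrees, which gives the inclusion of closures.

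One point needs care here. The rescaling issue — $A$ itself versus $rA$ — is handled because the body of $\pi^*(mD+A)$ contains the images of $\nu(s\,t^{\ell})/(\ell r)$ for $s\in V_{\ell r,j}(mD)$, where we treat $\ell r\cdot(mD+A)$ with $t^\ell\in H^0(\ell rA)$. If the definition of $\Delta_{\tilde Y_\bullet,j}$ is taken with the homogeneity convention $\Delta(cL)=c\Delta(L)$, this is consistent. Combining with the hypothesis then gives $\Delta^{-1}_{n+k+\varepsilon}\subseteq\Delta_{\tilde Y_\bullet,j}(\pi^*D')$ for all $j$ and all flag tuples.

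Applying \cref{prop:multipoint-NO-jet-separation} to $D'$ then yields that $K_X+D'=(m+m(D))D$ separates $k$-jets at $p_1,\dots,p_N$ simultaneously.

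The main obstacle I expect is the monotonicity step in the multipoint setting. It requires checking that the sets $V_{k,j}$ behave well under multiplication by a section nonvanishing at all $p_i$, and that the valuations are taken compatibly (infinitesimal valuations are computed on $\overline X$, where $\pi^*t$ does not vanish along any exceptional divisor $E_i$). It also requires confirming that the passage from $\mathbb{Z}$-divisors to their multiples is compatible with the closure and homogeneity in Trusiani's construction. If strict inequalities cause trouble for the limiting argument, I would instead work with the equivalent description of $\Delta_{\tilde Y_\bullet,j}$ through the restricted volume/graded subalgebra and invoke Trusiani's homogeneity and continuity results.
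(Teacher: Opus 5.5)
Your argument is correct, and at the key step it differs from the paper's. Both proofs write $(m+m(D))D=K_X+\bigl(mD+(m(D)D-K_X)\bigr)$ and apply \cref{prop:multipoint-NO-jet-separation} to $D'=mD+(m(D)D-K_X)$. The difference is how the inclusion $\Delta^{-1}_{n+k+\varepsilon}\subseteq\Delta_{\tilde Y_\bullet,j}(\pi^*D')$ is obtained.

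The paper first enlarges to single-point bodies via \cref{lem:InclusionInSinglePointBody}. It then uses superadditivity of single-point infinitesimal bodies together with $\mathbf{0}\in\Delta_{\tilde Y^{(j)}_\bullet}(\pi^*(m(D)D-K_X))$. This gives $\Delta^{-1}_{n+k+\varepsilon}\subseteq\Delta_{\tilde Y^{(j)}_\bullet}(\pi^*D')$, and the paper then simply asserts the same inclusion for the multipoint bodies $\Delta_{\tilde Y_\bullet,j}(\pi^*D')$. That last step runs against \cref{lem:InclusionInSinglePointBody}, which only gives multipoint $\subseteq$ single-point, so as written it is unjustified.

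Your monotonicity lemma supplies exactly this missing multipoint statement, working directly with the sets $V_{k,j}$. Because $\pi^*t$ is a unit along every $E_i$, multiplying by a power of $t$ adds the same vector (zero) to every $\nu_{\tilde Y^{(i)}_\bullet}$. Hence the strict inequalities defining $V_{k,j}$ are preserved. The paper's route uses only standard single-point facts, but it lands in the wrong body. Yours lands in the body the proposition actually requires. It also only uses that some multiple of $m(D)D-K_X$ has a section not vanishing at any $p_i$.

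One tightening is needed. Restricting to degrees divisible by $r$ is not justified by cofinality alone. Instead, for $s\in V_{k,j}(mD)$, note that $s^r t^k\in V_{kr,j}(mD+A)$ with valuation vector $r\,\nu_{\tilde Y^{(j)}_\bullet}(s)$. This holds because the lexicographic order on $\mathbb{Z}^n$ is compatible with addition, so $s^r$ still satisfies the strict inequalities. This section realizes the point $\nu_{\tilde Y^{(j)}_\bullet}(s)/k$. So the inclusion holds for the unions before taking closures, and neither a homogeneity convention nor the restricted-volume fallback is needed.
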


\begin{corollary}
    Let $X,D$ and $m(D)$ be as above and $k\in \mathbb{N}$. If for some positive integer in there exist $e>0$ such that $$\Delta_{n+k+\varepsilon}^{-1} \subseteq \Delta_{\tilde{Y}_\bullet,j}(m\pi^*(D))$$
    for all points $p_1,...,p_{k+1}\in X,j=1,...,k+1$ and every tuple of infinitesimal flags $\tilde{Y}_\bullet$, then $(m+m(D))\cdot D$ is $k-$jet ample
\end{corollary}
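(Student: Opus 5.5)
The plan is to reduce $k$-jet ampleness to the simultaneous jet separation statement of \cref{thm:generalized_multipoint_jet_separation}, via the standard definition of Beltrametti--Sommese. Recall that a line bundle $L$ is $k$-jet ample if for every choice of distinct points $p_1,\dots,p_r\in X$ and positive integers $k_1,\dots,k_r$ with $\sum_i k_i = k+1$, the restriction
\[
H^0(X,L)\longrightarrow H^0\Bigl(X, L\otimes \mathcal{O}_X/\textstyle\prod_{i}\mathfrak{m}_{p_i}^{k_i}\Bigr)
\]
is surjective. Since $\sum k_i=k+1$ forces $r\le k+1$, only configurations of at most $k+1$ points occur. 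This is why the hypothesis is imposed for all $(k+1)$-tuples of points.

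First, fix such data $p_1,\dots,p_r$, $k_1,\dots,k_r$ with $r\le k+1$. Apply \cref{thm:generalized_multipoint_jet_separation} to $Z=\{p_1,\dots,p_r\}$ with jet order $k$. The hypothesis for $r$ points must be deduced from the hypothesis for $k+1$ points. I would handle this in one of two ways. One option is to complete $Z$ by auxiliary distinct points $p_{r+1},\dots,p_{k+1}$ and use that separation of $k$-jets at a larger set implies separation at a subset, by composing with the projection onto the factors at $p_1,\dots,p_r$. Alternatively, one can reread the corollary as quantifying over all configurations of at most $k+1$ points. The conclusion is that
\[
H^0(X,(m+m(D))D)\to \bigoplus_{i=1}^r H^0\bigl(X,(m+m(D))D\otimes\mathcal{O}_X/\mathfrak{m}_{p_i}^{k+1}\bigr)
\]
is surjective.

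Next, since $k_i\le k+1$, we have $\mathfrak{m}_{p_i}^{k+1}\subseteq \mathfrak{m}_{p_i}^{k_i}$. By the Chinese remainder theorem for comaximal ideals at distinct points,
\[
\mathcal{O}_X/\textstyle\prod_i\mathfrak{m}_{p_i}^{k_i}\cong\bigoplus_i \mathcal{O}_{X,p_i}/\mathfrak{m}_{p_i}^{k_i}.
\]
The target space of the jet-ampleness condition is therefore a quotient of the target above, and surjectivity passes to it. Letting the configuration vary over all admissible choices gives the $k$-jet ampleness of $(m+m(D))D$.

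The main obstacle is the first step, the passage from $k+1$ points to fewer points. The multipoint Newton-Okounkov bodies $\Delta_{\tilde Y_\bullet,j}$ depend on the whole configuration, so the containment hypothesis for $k+1$ points does not literally give it for a sub-configuration. Adding points only shrinks the sets $V_{k,j}$, since there are more competing valuations, and hence shrinks the bodies. So containment for the larger configuration should imply containment for the smaller one. I would verify this monotonicity directly from the definition of $V_{k,j}$. If it fails, I would fall back on the subset argument via surjectivity onto a larger direct sum, which avoids the issue entirely.
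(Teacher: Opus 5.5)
Your argument is correct and matches the deduction the paper leaves implicit: the corollary is stated without proof as a direct consequence of \cref{thm:generalized_multipoint_jet_separation}, using $\mathfrak{m}_{p_i}^{k+1}\subseteq\mathfrak{m}_{p_i}^{k_i}$ and the definition of $k$-jet ampleness. Your treatment of configurations with fewer than $k+1$ points is sound either way: padding with auxiliary points and projecting works directly, and the monotonicity you suspect also holds, since enlarging the configuration only adds conditions to $V_{k,j}$ while the valuation at $p_j$ is unchanged, so the multipoint bodies can only shrink.
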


We conclude with an explicit computation based on an example originally due to \cite[Example 3.7]{EinLazarsfeld1993}, who considered a double covering of a product of two elliptic curves to exhibit a sequence of ample divisors whose multiples required larger and larger multiples to become very ample. We extend this phenomenon to k-jet ampleness and:

\begin{enumerate} 

\item Computing explicitly the threshold $k$ such that $(m+k)D_n$ is $m$-jet ample on the double cover, where $D_n = f^* A_n$. This provides concrete numerical bounds for higher-order jet separation and illustrates how the required multiples grow with the jet order.

\item Calculating the Newton-Okounkov body $\Delta(D_n)$ with respect to an admissible flag. We emphasize that our computation uses a standard flag construction; the infinitesimal flag approach, while theoretically more refined, presents significant computational challenges in this setting.
\end{enumerate}

This calculation demonstrates the effectiveness of our Newton-Okounkov body approach and provides concrete numerical bounds for jet separation in a non-trivial geometric setting.

\section*{Acknowledgements}

I gratefully acknowledge the financial support from the China Scholarship Council (CSC) under the grant number [202307300052].

I would like to express my sincere gratitude to my advisor, Dr.~Thomas Eckl, for proposing the research problem and providing invaluable guidance throughout this work. I am deeply grateful to my co-advisor, Dr.~Zhixian Zhu, for his theoretical support on toric varieties and meticulous verification of technical details in this paper.

This research was conducted under a joint supervision program.

% ========================================
% ★★★ SECTION 2: DEFINITIONS AND PRELIMINARIES ★★★
% ========================================
\section{Definitions and Preliminaries}
\label{sec:preliminaries}
% --- 2.1 Newton-Okounkov凸体 ---
% • 单点情形的精确定义与构造
% • 多点Newton-Okounkov凸体的定义
% • 多点情形的构造方式与赋值选择
% • 基本性质比较
% • 经典例子
\subsection{Newton-Okounkov Bodies}

\begin{definition}[Newton-Okounkov Body]\label{def:NewtonOkounkovBody}
Let $X$ be an irreducible projective variety of dimension $n$, and fix an admissible flag
\[Y_\bullet: X = Y_0 \supseteq Y_1 \supseteq Y_2 \supseteq \ldots \supseteq Y_{n-1} \supseteq Y_n = \{pt\},\]
where $\operatorname{codim}_X(Y_i) = i$ and each $Y_i$ is non-singular at the point $Y_n$.

Given a big divisor $D$ on $X$, we define a valuation-like function
\[\nu_{Y_\bullet}: H^0(X, \mathcal{O}_X(D)) - \{0\} \rightarrow \mathbb{Z}^n\]
by setting $\nu_1(s) = \operatorname{ord}_{Y_1}(s)$, and inductively defining $\nu_i(s)$ as the order of vanishing along $Y_i$ of the restriction of $s$ to $Y_{i-1}$ after dividing by the appropriate power of a local equation of $Y_{i-1}$.

The graded semigroup of $D$ is defined as:
\[\Gamma(D) = \{(\nu_{Y_\bullet}(s), m) \mid 0 \neq s \in H^0(X, \mathcal{O}_X(mD)), m \geq 0\} \subset \mathbb{N}^n \times \mathbb{N}\]

The Newton-Okounkov body of $D$ is then:
\[\Delta(D) = \Delta_{Y_\bullet}(D) = \Sigma(\Gamma) \cap (\mathbb{R}^n \times \{1\})\]
where $\Sigma(\Gamma)$ is the closed convex cone spanned by $\Gamma(D)$.
\end{definition}

\begin{remark}
The construction of the NO-body does not require the choice of a flag. A flag is merely one means to obtain a valuation $\nu: K(X)^*\rightarrow \mathbb{Z}^n$ of rank $n$, from which we define the NO-body $\Delta(D,\nu)$ via the associated value semigroup. Such valuations can be constructed in many ways. This valuation-theoretic perspective, developed systematically in \cite{Kaveh2009NewtonOkounkovBS} and further extended in \cite{Kaveh2016KhovanskiiBH}, provides a more intrinsic framework that encompasses the flag construction as a special case.

A notable example is the jet-valuation $\nu_{jet}$, which arises from applying an invertible linear transformation to the valuation vector associated with an infinitesimal flag. This transformation, known as jet separation, establishes a natural correspondence between NO-bodies with respect to $\nu_{jet}$ and those with respect to the infinitesimal flag. More precisely, if the jet valuation is given by \[\nu_{jet} = (\nu_1, \nu_2, \ldots, \nu_k),\] then the infinitesimal flag valuation is defined by the transformation
\[\nu_{inf} = (\nu_1+\nu_2+...+\nu_k, \nu_2, \nu_3 , \ldots, \nu_k).\]
This linear change of coordinates induces a corresponding transformation between the associated NO-bodies, as illustrated in Figure~\ref{fig:jet_transformation}.
\begin{figure}[h]
\centering
\begin{tikzpicture}[scale=1.5]
% Left triangle (infinitesimal flag valuation)
\draw[thick, ->] (0,0) -- (2.5,0) node[right] {\(\nu_1\)};
\draw[thick, ->] (0,0) -- (0,2.5) node[above] {\(\nu_2\)};
\fill[blue!20, opacity=0.5] (0,0) -- (2,0) -- (0,2) -- cycle;
\draw[thick, blue] (0,0) -- (2,0) -- (0,2) -- cycle;
% Label edges with epsilon
\node at (1, -0.3) {\(\varepsilon\)};
\node at (-0.3, 1) {\(\varepsilon\)};
\node at (1, -0.8) {Jet valuation};

% Arrow indicating transformation
\draw[->, thick] (3,1) -- (4,1) ;

% Right triangle (jet valuation)
\begin{scope}[xshift=5cm]
\draw[thick, ->] (0,0) -- (2.5,0) node[right] {\(\nu_1\)};
\draw[thick, ->] (0,0) -- (0,2.5) node[above] {\(\nu_2\)};
\fill[red!20, opacity=0.5] (0,0) -- (2,0) -- (2,2) -- cycle;
\draw[thick, red] (0,0) -- (2,0) -- (2,2) -- cycle;
% Label edges with epsilon
\node at (1, -0.3) {\(\varepsilon\)};
\node at (2.3, 1) {\(\varepsilon\)};
\node at (1.2, -0.8) {Infinitesimal valuation};
\end{scope}
\end{tikzpicture}
\caption{Transformation between jet valuation and infinitesimal valuation: \(\triangle_{\varepsilon} \rightarrow \Delta^{-1}_\epsilon\)}
\label{fig:jet_transformation}
\end{figure}

\end{remark}

\begin{definition}[Infinitesimal Newton-Okounkov Body]\label{def:InfinitesimalNewtonOkounkovBody}
Let $X$ be an irreducible projective variety of dimension $n$. Fix a smooth point $x \in X$ and a complete flag $V_\bullet$ of subspaces
\[T_xX = V_0 \supseteq V_1 \supseteq V_2 \supseteq \ldots \supseteq V_{n-1} \supseteq \{0\}\]
in the tangent space to $X$ at $x$.

Consider the blowing up $\pi: \overline{X} = \text{Bl}_x(X) \rightarrow X$ of $X$ at $x$, with exceptional divisor $E = \mathbb{P}(T_xX)$. The projectivizations of the $V_i$ give rise to a flag $\tilde{Y}_{\bullet} = F(x; V_\bullet)$ of subvarieties of $\overline{X}$.

For a big divisor $D$ on $X$, we define $D' = \pi^*D$. Since $H^0(X, \mathcal{O}_X(mD)) = H^0(\overline{X}, \mathcal{O}_{\overline{X}}(mD'))$ for all $m$, we define:
\[\Delta_{\tilde{Y}_{\bullet}}(D) = \Delta_{\tilde{Y}_{\bullet}}(D')\]

For very general choices of $x$ and $V_\bullet$, the Okounkov bodies $\Delta_{\tilde{Y}_{\bullet}}(D) \subset \mathbb{R}^n$ all coincide. This gives rise to a canonically defined convex body $\Delta'(D) \subset \mathbb{R}^n$ that does not depend on any auxiliary choices.
\end{definition}

\begin{remark}\label{rem:infinitesimal_valuation}
In \cite{küronya2015infinitesimalnewtonokounkovbodiesjet}, the infinitesimal valuation $\nu_{\tilde{Y}_{\bullet}}: H^0(X, \mathcal{O}_X(D)) \setminus \{0\} \to \mathbb{Z}^n$ has the following structure: the first coordinate records the minimal vanishing order along $\tilde{Y}_1$, while the remaining coordinates encode the valuation of the lexicographically minimal monomial among all lowest-order terms. This is because the quotient by a local equation for $\tilde{Y}_1$ eliminates higher-order terms in the first step, leaving only the minimal monomial for subsequent valuation computation.
\end{remark}

\begin{definition}[Inverted Standard Simplex]
For a positive real number \(\xi \geq 0\), the inverted standard simplex of size \(\xi\), denoted by \(\Delta_{\xi}^{-1}\), is the convex hull of the set
\[
\Delta_{\xi}^{-1} \stackrel{\text{def}}{=} \{0, \xi e_1, \xi(e_1 + e_2), \ldots, \xi(e_1 + \cdots + e_n)\} \subseteq \mathbb{R}^n.
\]
When \(\xi = 0\), then \(\Delta_{\xi}^{-1} = \mathbf{0}\).
\end{definition}

% \begin{theorem}[\cite{küronya2015infinitesimalnewtonokounkovbodiesjet} Theorem 3.1]
% \label{thm:stable-base-locus-characterization}
% Let $X$ be a smooth projective variety, $D$ a big $\mathbb{R}$-divisor and $x \in X$ an arbitrary point on $X$. Let $\mathbf{B}_-(D)$ be restricted base loci. Then the following are equivalent.
% \begin{enumerate}
% \item[(1)] $x \notin \mathbf{B}_-(D)$.
% \item[(2)] There exists an infinitesimal flag $Y_{\bullet}$ over $x$ such that $\mathbf{0} \in \Delta_{Y_{\bullet}}(D)$.
% \item[(3)] For every infinitesimal flag $Y_{\bullet}$ over $x$, one has $\mathbf{0} \in \Delta_{Y_{\bullet}}(D)$.
% \end{enumerate}
% \end{theorem}
\begin{theorem}[\cite{KuronyaLozovanu2017} Theorem 2.1]
\label{thm:stable-base-locus-characterization}
Let $D$ be a big $\mathbb{R}$-divisor on a smooth projective variety $X$ of dimension $n$, let $x \in X$. Then the following are equivalent.
\begin{enumerate}
    \item[(1)] $x \notin \mathbf{B}_-(D)$.
    \item[(2)] There exists an admissible flag $Y_\bullet$ on $X$ centered at $x$ such that $\mathbf{0} \in \Delta_{Y_\bullet}(D) \subseteq \mathbb{R}^n$.
    \item[(3)] The origin $\mathbf{0} \in \Delta_{Y_\bullet}(D)$ for every admissible flag $Y_\bullet$ on $X$ centered at $x \in X$.
\end{enumerate}
\end{theorem}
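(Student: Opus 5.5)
The implication (3)$\Rightarrow$(2) is trivial, so the plan is to prove (1)$\Rightarrow$(3) and (2)$\Rightarrow$(1). Throughout I use the standard description of the restricted base locus $\mathbf{B}_-(D)=\bigcup_{A}\mathbf{B}(D+A)$, the union over small ample $\mathbb{R}$-divisors $A$. I also use the characterization of Ein–Lazarsfeld–Mustaţă–Nakamaye–Popa: on a smooth variety, $x\in\mathbf{B}_-(D)$ if and only if the asymptotic multiplicity $\operatorname{mult}_x\|D\|>0$.

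For (1)$\Rightarrow$(3), fix an admissible flag $Y_\bullet$ centered at $x$ and an ample divisor $A$.
\begin{enumerate}
\item Since $x\notin\mathbf{B}_-(D)$, for every sufficiently small rational $\delta>0$ we have $x\notin\mathbf{B}(D+\delta A)$. After perturbing within the big cone, we may assume $D+\delta A$ is a $\mathbb{Q}$-divisor.
\item Hence some $s\in H^0(X,\mathcal{O}_X(p(D+\delta A)))$ does not vanish at $x=Y_n$. Then $\nu_{Y_\bullet}(s)=\mathbf{0}$, so $\mathbf{0}\in\Delta_{Y_\bullet}(D+\delta A)$.
\item Invoke continuity of Okounkov bodies in the big cone, as in Lazarsfeld–Mustaţă: the global Okounkov cone is closed and convex, and its fibres over the big cone vary continuously. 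This gives $\Delta_{Y_\bullet}(D)=\bigcap_{\delta>0}\Delta_{Y_\bullet}(D+\delta A)$, using that the bodies increase with $\delta$. Therefore $\mathbf{0}\in\Delta_{Y_\bullet}(D)$.
\end{enumerate}

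For (2)$\Rightarrow$(1), argue by contraposition: assume $x\in\mathbf{B}_-(D)$, so $\sigma:=\operatorname{mult}_x\|D\|>0$.
\begin{enumerate}
\item Every nonzero section $s$ of $\mathcal{O}_X(mD)$ satisfies $\operatorname{ord}_x(s)\ge m\sigma$. This holds for integral multiples; for $\mathbb{R}$-divisors, first pass to rational approximations $D'\le D$ using the continuity of $\operatorname{mult}_x\|\cdot\|$ in the big cone.
\item The key local claim is that $\nu_1(s)+\cdots+\nu_n(s)\ge\operatorname{ord}_x(s)$ for any admissible flag centered at $x$. To prove it, choose local (formal) coordinates $z_1,\dots,z_n$ at $x$ with $Y_i=\{z_1=\cdots=z_i=0\}$ near $x$; this is possible because each $Y_i$ is smooth at $x$. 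Expanding $s$ as a power series, the successive "order along $Y_i$ after dividing and restricting" picks out the lexicographically smallest exponent vector $\alpha$ occurring in $s$, so $\nu_{Y_\bullet}(s)=\alpha$. Since $z^\alpha$ actually appears in $s$, we get $|\alpha|\ge\operatorname{ord}_x(s)$.
\item Consequently every normalized valuation vector $\nu_{Y_\bullet}(s)/m$ lies in the closed half-space $\{t\in\mathbb{R}^n: t_1+\cdots+t_n\ge\sigma\}$. Hence so does $\Delta_{Y_\bullet}(D)$, which therefore does not contain $\mathbf{0}$. This holds for every admissible flag centered at $x$, contradicting (2).
\end{enumerate}

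I expect the main obstacle to be the local claim in the second step of (2)$\Rightarrow$(1): making rigorous that the flag valuation equals the lex-minimal exponent in suitable coordinates. The flag $Y_i$ need only be smooth at $x$, not globally, so I would work in the completed local ring $\widehat{\mathcal{O}}_{X,x}\cong\mathbb{C}[[z_1,\dots,z_n]]$ and check inductively that restricting to $Y_{i-1}$ and dividing by $z_i^{\nu_i}$ commutes with taking lex-minimal terms. The other delicate point is the reduction from $\mathbb{R}$-divisors to $\mathbb{Q}$-divisors in both directions, which I would handle entirely through the continuity results quoted above rather than redoing them.
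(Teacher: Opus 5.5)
The paper gives no proof of this statement; it is quoted from K\"uronya--Lozovanu. Your argument is correct and is essentially the standard proof from that source: closedness of the global Okounkov cone for (1)$\Rightarrow$(3), and for (2)$\Rightarrow$(1) the criterion $x\in\mathbf{B}_-(D)\iff\operatorname{mult}_x\|D\|>0$ combined with the local bound $\operatorname{ord}_x(s)\le\nu_1(s)+\cdots+\nu_n(s)$, which traps $\Delta_{Y_\bullet}(D)$ in a closed half-space missing $\mathbf{0}$.

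The only thing to tidy is the $\mathbb{R}$-divisor reduction, in two places:
\begin{itemize}
\item In (1)$\Rightarrow$(3), a fixed $D+\delta A$ need not be rational, so approximate from above by $D+A_k$ with $A_k$ ample, $A_k\to 0$ and $D+A_k$ rational.
\item In (2)$\Rightarrow$(1), use $\Delta_{Y_\bullet}(D)\subseteq\Delta_{Y_\bullet}(D+A_k)$ together with $\operatorname{mult}_x\|D+A_k\|\to\operatorname{mult}_x\|D\|$. An approximation $D'\le D$, as you wrote, only yields $\Delta_{Y_\bullet}(D')\subseteq\Delta_{Y_\bullet}(D)$, which is the wrong inclusion.
\end{itemize}
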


The construction of multipoint Newton-Okounkov bodies can be found in \cite{Trusiani2018MultipointOB}.
\begin{definition}[Multipoint Newton-Okounkov Body]\label{def:MultipointNewtonOkounkovBody}
Let $X$ be an irreducible projective variety of dimension $n$, and fix $N$ distinct points \(x_1, \ldots, x_N \in X\) with corresponding faithful valuations \(\nu_{x_1}, \ldots, \nu_{x_N}\).

For a big divisor $D$ on $X$ (or equivalently, a big line bundle $L = \mathcal{O}_X(D)$), we define the subspace $V_{k,j} \subset H^0(X, kL)$ as
\[V_{k,j} = \{s \in H^0(X, kL) \setminus \{0\} : \nu_{x_j}(s) < \nu_{x_i}(s) \text{ for any } i \neq j\}.\]

The graded semigroup at point $x_j$ is defined as:
\[\Gamma_j(L) = \{(\nu_{x_j}(s), k) : s \in V_{k,j}, k \geq 1\} \subset \mathbb{Z}^n \times \mathbb{Z}\]

The multipoint Newton-Okounkov body of $L$ at $x_j$ is then:
\[\Delta_j(L) = \Delta(\Gamma_j) = \overline{\bigcup_{k\geq 1}\frac{\nu_{x_j}(V_{k,j})}{k}} \subset \mathbb{R}^n
\]

These multipoint Okounkov bodies depend on the choice of the faithful valuations \(\nu_{x_1}, \ldots, \nu_{x_N}\), but we omit this dependence to simplify notation.
\end{definition}

An explicit construction for toric varieties is given in \cite[Theorem 6.4]{Trusiani2018MultipointOB}.

\begin{definition}[Multipoint Infinitesimal Newton-Okounkov Body]\label{def:MultipointInfinitesimalNewtonOkounkovBody}
Let $X$ be an irreducible projective variety of dimension $n$, and let $Z = \{x_1, \ldots, x_N\} \subset X$ be a 0-cycle of distinct smooth points. For each point \(x_j\), fix a complete flag of subspaces
\[T_{x_j}X = V^{(j)}_0 \supseteq V^{(j)}_1 \supseteq \ldots \supseteq V^{(j)}_{n-1} \supseteq \{0\}.\]

Consider the blow-up \(\pi: \overline{X} = \mathrm{Bl}_{Z}(X) \to X\) along the 0-cycle $Z$ with exceptional divisors \(E_j = \mathbb{P}(T_{x_j}X)\) over each point $x_j$. The projectivizations of the \(V^{(j)}_i\) give rise to flags \(\tilde{Y}^{(j)}_{\bullet} = F(x_j; V^{(j)}_\bullet)\) of subvarieties of \(\overline{X}\). And we have ${\tilde{Y}_{\bullet}}=({\tilde{Y}_\bullet^{(1)}},...,{\tilde{Y}_\bullet^{(N)}})$.

For a big divisor $D$ on $X$, define $D' = \pi^*D$. Since $H^0(X, \mathcal{O}_X(mD)) = H^0(\overline{X}, \mathcal{O}_{\overline{X}}(mD'))$ for all $m$, we define the valuation:
\[\nu_{\tilde{Y}^{(j)}_{\bullet}}(s) = \nu_{\tilde{Y}^{(j)}_{\bullet}}(\pi^*s)\]

Define the subspaces
\[
V_{k,j} = \{ s \in H^0(X, kD) \setminus \{0\} : \nu_{\tilde{Y}^{(j)}_\bullet}(s) < \nu_{\tilde{Y}^{(i)}_\bullet}(s) \text{ for all } i \neq j \}.
\]

The graded semigroup at \(x_j\) is
\[
\Gamma_{\tilde{Y}_\bullet,j}(D) = \{ (\nu_{\tilde{Y}^{(j)}_\bullet}(s), k) : s \in V_{k,j}, k \geq 1 \} \subset \mathbb{Z}^n \times \mathbb{Z}.
\]

The multipoint infinitesimal Newton-Okounkov body at \(x_j\) is defined as
\[
\Delta_{\tilde{Y}_\bullet,j}(D) = \Delta(\Gamma_{\tilde{Y}_\bullet,j}) = \overline{\bigcup_{k \geq 1} \frac{\nu_{\tilde{Y}^{(j)}_\bullet}(V_{k,j})}{k}} \subset \mathbb{R}^n
.
\] 
\end{definition}

\begin{lemma}\label{lem:InclusionInSinglePointBody}
For each $j \in \{1, \ldots, N\}$, we have
\[
\Delta_{\tilde{Y}_\bullet,j}(D) \subseteq \Delta_{\tilde{Y}^{(j)}_\bullet}(D),
\]
where $\Delta_{\tilde{Y}_\bullet,j}(D)$ is the multipoint infinitesimal Newton-Okounkov body at $x_j$ defined in Definition~\ref{def:MultipointInfinitesimalNewtonOkounkovBody}, and $\Delta_{\tilde{Y}^{(j)}_\bullet}(D)$ is the classical infinitesimal Newton-Okounkov body with respect to the single flag $\tilde{Y}^{(j)}_\bullet$ at $x_j$.
\end{lemma}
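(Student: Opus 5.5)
The plan is to compare the generating sets of the two bodies directly. By Definition~\ref{def:MultipointInfinitesimalNewtonOkounkovBody}, $V_{k,j}\subseteq H^0(X,kD)\setminus\{0\}$, simply because it is cut out by the extra condition $\nu_{\tilde{Y}^{(j)}_\bullet}(s)<\nu_{\tilde{Y}^{(i)}_\bullet}(s)$ for $i\neq j$. The valuation used on $V_{k,j}$ is the same map $\nu_{\tilde{Y}^{(j)}_\bullet}$ that defines the classical infinitesimal body. That body is formed on $\mathrm{Bl}_x X$, while here we blow up all of $Z$. However, $\pi^*$ identifies sections on both blow-ups. Moreover, $\nu_{\tilde{Y}^{(j)}_\bullet}$ only depends on the local ring at the point $\tilde{Y}^{(j)}_n\in E_j$, and the two blow-ups are isomorphic near $E_j$. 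So the two valuations agree on $H^0(X,kD)$.

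It follows that the graded semigroups satisfy
\[
\Gamma_{\tilde{Y}_\bullet,j}(D)\subseteq \Gamma_{\tilde{Y}^{(j)}_\bullet}(D)=\{(\nu_{\tilde{Y}^{(j)}_\bullet}(s),k): 0\neq s\in H^0(X,kD)\}.
\]
Passing to closed convex cones preserves inclusion, so $\Sigma(\Gamma_{\tilde{Y}_\bullet,j})\subseteq\Sigma(\Gamma_{\tilde{Y}^{(j)}_\bullet})$. Slicing at height $1$ then gives the claim.

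The one point that needs care is that the multipoint body is written as the closure of $\bigcup_k \nu(V_{k,j})/k$, not as a slice of a cone. I would handle this without comparing the two descriptions of the multipoint body. Each point $\nu(s)/k$ with $s\in V_{k,j}$ equals $(\nu(s),k)/k$, so it lies in $\Sigma(\Gamma_{\tilde{Y}^{(j)}_\bullet})\cap(\mathbb{R}^n\times\{1\})=\Delta_{\tilde{Y}^{(j)}_\bullet}(D)$. This set is closed, so it also contains the closure of the union. This proves the inclusion whichever description of $\Delta_{\tilde{Y}_\bullet,j}(D)$ is taken.

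The only real obstacle is the identification of the valuations computed on $\mathrm{Bl}_Z X$ and on $\mathrm{Bl}_{x_j}X$. I would settle it by the local isomorphism argument above: the flag valuation is determined by the Laurent or Taylor expansion in local coordinates at $\tilde{Y}^{(j)}_n$. The remaining steps are formal.
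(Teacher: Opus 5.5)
Your proposal is correct and follows the paper's proof, which is exactly the observation that $V_{k,j}\subseteq H^0(X,kD)\setminus\{0\}$, with the same valuation $\nu_{\tilde{Y}^{(j)}_\bullet}$ used on both sides. Your additional checks make explicit what the paper leaves implicit: the flag valuation on $\mathrm{Bl}_Z X$ agrees with the one on $\mathrm{Bl}_{x_j}X$ because the two blow-ups are isomorphic near $E_j$, and the closedness of $\Delta_{\tilde{Y}^{(j)}_\bullet}(D)$ absorbs the closure in the multipoint definition.
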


\begin{proof}
This follows immediately from the construction, since $V_{k,j} \subseteq H^0(X, kD) \setminus \{0\}$.
\end{proof}

Let $D_j = Y_{j,1}$ be prime divisors and $\mathbb{D} := \sum_{i=1}^N D_i$. Define 
\[
\mu(L; \mathbb{D}) := \sup\{t \geq 0 : L - t\mathbb{D} \text{ is big}\}, \quad \mu(L; D_j) := \sup\{t \geq 0 : \Delta_j(L - t\mathbb{D})^{\circ} \neq \emptyset\}.
\]
\begin{theorem}[\cite{Trusiani2018MultipointOB} Theorem 3.21]\label{thm:okounkov-body-slice}
Let $L$ be a big $\mathbb{R}$-line bundle, $\nu_{p_j}$ a family of valuations associated to a family of admissible flags $Y_{\cdot}$ centered at $p_1, \ldots, p_N$. 
Then, letting $(x_1, \ldots, x_n)$ be fixed coordinates on $\mathbb{R}^n$, for any $j \in \{1, \ldots, N\}$ such that $\Delta_j(L)^{\circ} \neq \emptyset$, we have

$$\Delta_j(L)_{x_1 \geq t} = \Delta_j(L - t\mathbb{D}) + t\vec{e}_1$$ for any $0 \leq t < \mu(L; D_j)$, where $\Delta_j(L)_{x_1 \geq t}$ denotes the slice where the first coordinate is at least $t$.
\end{theorem}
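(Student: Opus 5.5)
The plan is to imitate the proof of the slice theorem for ordinary Okounkov bodies (Lazarsfeld--Mustaţă, Theorem 4.24). The idea is to identify sections of $kL$ that vanish to order at least $kt$ along the first flag divisors with sections of $k(L-t\mathbb{D})$. We assume, as is implicit in the setup, that $D_i=Y_{i,1}$ does not pass through $p_j$ for $i\neq j$. Then multiplying by a local equation of $D_i$ shifts $\nu_{p_i}$ by $\vec e_1$ and leaves $\nu_{p_j}$ unchanged for $j\neq i$. Consequently, multiplying by the canonical section $s_{\mathbb{D}}$ of $\mathcal{O}(\mathbb{D})$ adds exactly $\vec e_1$ to every $\nu_{p_i}$ simultaneously. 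Using the homogeneity $\Delta_j(pL)=p\Delta_j(L)$ for the multipoint bodies, we first reduce to the case where $t$ is rational, $kt$ is an integer, and $L$ is an integral line bundle.

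\textbf{Step 1 (inclusion $\supseteq$).} Let $s'\in V_{k,j}(L-t\mathbb{D})$, a section of $k(L-t\mathbb{D})$. Put $s=s'\cdot s_{\mathbb{D}}^{kt}\in H^0(X,kL)$. Then $\nu_{p_i}(s)=\nu_{p_i}(s')+kt\,\vec e_1$ for every $i$. This common translation preserves the strict lexicographic inequalities defining $V_{k,j}$, so $s\in V_{k,j}(L)$. Hence $\nu_{p_j}(s)/k=\nu_{p_j}(s')/k+t\vec e_1$ lies in $\Delta_j(L)$ and has first coordinate at least $t$.

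\textbf{Step 2 (inclusion $\subseteq$ on the level of semigroups).} Let $s\in V_{k,j}(L)$ with $\nu_{p_j,1}(s)=\operatorname{ord}_{D_j}(s)\ge kt$. For $i\ne j$ we have $\nu_{p_i}(s)>\nu_{p_j}(s)$ lexicographically. Lexicographic order gives $\operatorname{ord}_{D_i}(s)\ge\operatorname{ord}_{D_j}(s)\ge kt$. Hence $s$ vanishes to order at least $kt$ along every $D_i$, so $s=s'\cdot s_{\mathbb{D}}^{kt}$ with $s'\in H^0(X,k(L-t\mathbb{D}))$. By the same translation argument as in Step 1, $s'\in V_{k,j}(L-t\mathbb{D})$. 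Therefore the slice of the semigroup $\Gamma_j(L)$ cut out by $x_1\ge kt$ at level $k$ is exactly the translate of $\Gamma_j(L-t\mathbb{D})$ at level $k$.

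\textbf{Step 3 (passing to closures and to real $t$).} I expect this to be the main obstacle. The body $\Delta_j(L)$ is the closure of a union, so a point with $x_1\ge t$ might be a limit only of points with $x_1<t$. To handle this I would use that $\Delta_j(L)$ is a convex body with nonempty interior, which holds by Trusiani's results when $\Delta_j(L)^\circ\neq\emptyset$. For $t<\mu(L;D_j)$, the slice $\Delta_j(L)_{x_1\ge t}$ is then the closure of its part with $x_1>t$. Every point of that part is approximated by normalized valuation vectors $\nu_{p_j}(s)/k$ with $x_1\ge t'$ for some rational $t'>t$. Step 2 then places such points in $\Delta_j(L-t'\mathbb{D})+t'\vec e_1$.

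To conclude, I would prove that $t\mapsto\Delta_j(L-t\mathbb{D})$ is continuous in the Hausdorff metric on $[0,\mu(L;D_j))$. The inclusions from Steps 1 and 2 show that these bodies are nested monotonically in $t$, after the translation by $t\vec e_1$. Convexity together with nonempty interior then gives continuity. With continuity in hand, letting $t'\downarrow t$ yields the desired equality for all real $t$ in the stated range.
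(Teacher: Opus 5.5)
The paper gives no proof of this statement; it is quoted from Trusiani. Your argument therefore has to be judged on its own.

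\textbf{Steps 1 and 2 are correct.} They are the right multipoint version of the Lazarsfeld--Musta\c{t}\u{a} slice argument. Because the order is lexicographic, $s\in V_{k,j}(L)$ forces $\operatorname{ord}_{D_i}(s)\ge\operatorname{ord}_{D_j}(s)$ for all $i$, which gives divisibility by $s_{\mathbb D}^{kt}$. Multiplying by $s_{\mathbb D}$ then shifts every $\nu_{p_i}$ by the same vector $\vec e_1$.

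\textbf{Your extra hypothesis is necessary.} The condition $p_j\notin D_i$ for $i\neq j$ is not merely implicit: without something like it, the statement as quoted fails. Take $\mathbb P^2$ with $L=\mathcal O(1)$, $p_1=[1:0:0]$, $p_2=[0:1:0]$, and both flags beginning with the line $\ell=\{z=0\}$. Then $\Delta_1(L)=\operatorname{conv}\{0,\vec e_1,\tfrac12\vec e_2\}$ and $\mathbb D=2\ell$. Hence $\Delta_1(L-t\mathbb D)+t\vec e_1=(1-2t)\Delta_1(L)+t\vec e_1$ ends at $x_1=1-t$. On the other hand, $\Delta_1(L)_{x_1\ge t}$ contains $\vec e_1$, for every $0<t<\tfrac12=\mu(L;D_1)$. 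In the paper's application ($D_i=E_i$, pairwise disjoint) your hypothesis holds automatically.

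\textbf{The gap is in Step 3, in the passage beyond $\mathbb Q$.}

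For rational $t$ you need no continuity at all. A point of $\Delta_j(L)$ with $x_1>t$ is a limit of normalized valuation vectors, which eventually have $x_1>t$. After replacing sections by powers so that $kt\in\mathbb Z$, Step 2 puts these vectors in $\Delta_j(L-t\mathbb D)+t\vec e_1$. Points with $x_1=t$ are limits of such points by convexity, since Step 1 together with $t<\mu(L;D_j)$ supplies points with $x_1>t$. So your proof is complete for $\mathbb Q$-line bundles and rational $t$, which is all the paper uses later ($t=n+k$, $L=\pi^*D$).

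The statement, however, is for big $\mathbb R$-line bundles and real $t$, and two things are missing there:
\begin{itemize}
\item Homogeneity cannot reduce a real $L$ to an integral one.
\item For irrational $t$, $\Delta_j(L-t\mathbb D)$ is the body of an $\mathbb R$-class, which the section-counting definition does not produce at all.
\end{itemize}
Your bridge (the bodies are monotonically nested, convex, with nonempty interior, hence Hausdorff continuous) is invalid. For example, $K_t=[0,1]^2$ for $t\le\tfrac12$ and $K_t=[0,\tfrac12]\times[0,1]$ for $t>\tfrac12$ is such a family, and it jumps at $t=\tfrac12$.

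What the rational case actually gives you is continuity along rational $t$, because there the translated bodies are the slices of the fixed convex body $\Delta_j(L)$. The missing input is that the multipoint body of a big $\mathbb R$-class is the limit of the bodies of nearby $\mathbb Q$-classes. This is continuity of $\xi\mapsto\Delta_j(\xi)$, the multipoint analogue of Lazarsfeld--Musta\c{t}\u{a}'s continuity via the global Okounkov body, and it has to be imported. With it, both sides of the identity are continuous in $(L,t)$ on the relevant range, and the rational case implies the general one.
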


% --- 2.2 Jet分离理论 ---
% • Jet概形的定义
% • Jet分离的定义与几何解释
% • 与乘子理想的联系 
\subsection{Jet Separation and Jet Ampleness}
\begin{definition}[Separation of jets]
Let $L$ be a line bundle on a projective variety $X$, and fix a smooth point $x \in X$ with maximal ideal $\mathfrak{m}_x \subseteq \mathcal{O}_X$. One says that $|L|$ \emph{separates $s$-jets at $x$} if the natural map
\[
H^0(X,L) \longrightarrow H^0\bigl(X, L \otimes \mathcal{O}_{X,x_i}/ \mathfrak{m}_x^{s+1}\bigr) 
\]
taking sections of $L$ to their $s$-jets is surjective.

For a divisor $D$ on $X$, we say that $D$ \emph{separates $s$-jets at $x$} if the line bundle $\mathcal{O}_X(D)$ separates $s$-jets at $x$.

Thus $L$ (or $D$) separates $0$-jets if and only if $L$ (or $\mathcal{O}_X(D)$) is free at $x$, and in this case $L$ (or $D$) separates $1$-jets if and only if the derivative $d_x \phi_{|L|}$ (or $d_x \phi_{|D|}$) at $x$ of the regular map is injective.
\end{definition}

\begin{definition}[k-Jet Ampleness]
Let $X$ be an $n$-dimensional projective manifold and $L$ a line bundle on $X$. For a nonnegative integer $k$, we say that $L$ is $k$-jet ample if, given any $N$ integers $k_1, \ldots, k_N$ such that $k + 1 = \sum_{i=1}^N k_i$, and any $N$ distinct points $\{x_1, \ldots, x_N\} \subset X$, the evaluation map
$$H^0(X, L) \to H^0\bigl(X, L \otimes \mathcal{O}_{X}/(\otimes_{i=1}^N\mathfrak{m}_{x_i}^{k_i})\bigr)\to 0$$
is surjective, where $\mathfrak{m}_{x_i}$ denotes the maximal ideal at $x_i$.

For a divisor $D$ on $X$, we say that $D$ is $k$-jet ample if the line bundle $\mathcal{O}_X(D)$ is $k$-jet ample.

Note that:
\begin{itemize}
\item $L$ (or $D$) is spanned (i.e., globally generated) if and only if it is $0$-jet ample
\item $L$ (or $D$) is very ample if and only if it is $1$-jet ample
\end{itemize}
\end{definition}

\begin{remark}
Jet ampleness can be viewed as a generalization of jet separation to multiple points. It requires that global sections can simultaneously realize any prescribed jet data at any finite collection of points, with the total order of jets constrained by the ampleness parameter $k$.
\end{remark}

 \begin{definition}[k-Jet Ampleness Supported on a 0-cycle]\label{def:kJetAmplenessSupportedOn0Cycle}
Let $X$ be an $n$-dimensional projective manifold, $L$ a line bundle on $X$, and let $Z = \{x_1, \ldots, x_N\} \subset X$ be a 0-cycle of distinct points. For a nonnegative integer $k$, we say that $L$ is \emph{$k$-jet ample supported on $Z$} if, for any integers $k_1, \ldots, k_N$ satisfying
\[
k + 1 = \sum_{j=1}^N k_j,
\]
the evaluation map
\[
 H^0(X, L) \to H^0\bigl(X, L \otimes \mathcal{O}_{X}/(\otimes_{i=1}^N\mathfrak{m}_{x_i}^{k_i})\bigr)\to 0
\]
is surjective, where $m_{x_j}$ denotes the maximal ideal at $x_j$.

For a divisor $D$ on $X$, $D$ is said to be $k$-jet ample supported on $Z$ if $\mathcal{O}_X(D)$ is $k$-jet ample supported on $Z$.
\end{definition}

\begin{theorem}[\cite{cyclic_higher_order_embed} Theorem 2.1]\label{thm:cyclic_covering_jet_ample}
Let $X$ be a smooth projective variety and $B \subset X$ a smooth divisor. Let $M$ be a line bundle on $X$ such that $\mathcal{O}_X(dM) \cong \mathcal{O}_X(B)$ and let $\pi : Y \to X$ be the cyclic covering of degree $d$ defined by $M$. Let $L$ be a line bundle on $X$ and $k$ a non-negative integer. If $L - qM$ is $(k-q)$-jet ample for $q = 0, \ldots, \min(k, d-1)$, then $\pi^*L$ is $k$-jet ample.
\end{theorem}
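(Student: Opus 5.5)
The plan is to reduce the statement to a jet-interpolation problem on $X$, one problem for each eigensheaf of the cyclic cover, and then solve each of these with the hypothesis on $L-qM$.

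\emph{Splitting the sections.} Since $\pi$ is the cyclic cover defined by $M$, we have $\pi_*\mathcal{O}_Y\cong\bigoplus_{q=0}^{d-1}\mathcal{O}_X(-qM)$. By the projection formula,
\[
H^0(Y,\pi^*L)=\bigoplus_{q=0}^{d-1}H^0(X,L-qM).
\]
Locally, $Y$ sits in the total space of $M$ as $t^d=f$, where $f$ is the section defining $B$ and $t$ is the tautological coordinate. The summand indexed by $q$ consists of the local expressions $t^q g_q$ with $g_q$ a function on $X$. So a section of $\pi^*L$ is locally $\sum_q t^q\,\pi^*g_q$. Fix distinct points $y_1,\dots,y_N\in Y$ and orders $k_i$ with $\sum k_i=k+1$. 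I will group the $y_i$ by their images $x\in X$. Then, for each $q$, I will find a $0$-dimensional scheme $Z_q\subset X$ with two properties:
\begin{enumerate}
\item[(a)] every prescribed jet datum at the $y_i$ is realized by some $\sum_q t^q g_q$ in which the jets of the $g_q$ are taken along $Z_q$;
\item[(b)] $Z_q$ is of the form $\bigotimes_x\mathfrak m_x^{a_q(x)}$ with $\sum_x a_q(x)\le k-q+1$.
\end{enumerate}
The $(k-q)$-jet ampleness of $L-qM$ then produces global sections $s_q\in H^0(L-qM)$ with the required jets along $Z_q$. For $q>\min(k,d-1)$ every $a_q(x)$ will be zero, so no section is needed there.

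\emph{Ramified points.} Suppose $y_i$ lies over $x\in B$, so it is the only point of $Y$ over $x$. I use local coordinates $t,x_2,\dots,x_n$ on $Y$ and $f=t^d,x_2,\dots,x_n$ on $X$. The monomials of $t^q g_q(t^d,x')$ are $t^{q+dj}x'^{\alpha}$. Such a monomial lies in $\mathfrak m_{y_i}^{k_i}$ as soon as $q+dj+|\alpha|\ge k_i$, and this holds whenever $j+|\alpha|\ge k_i-q$, because $dj\ge j$. Hence the $(k_i-1)$-jet at $y_i$ depends only on the $(k_i-q-1)$-jet of $g_q$ at $x$. Conversely, every jet at $y_i$ has a unique expansion $\sum_{q<d} t^q g_q$. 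I therefore set $a_q(x)=\max(k_i-q,0)$.

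\emph{Unramified points.} Suppose $x\notin B$, and the points of the fiber that are among the $y_i$ are $r\le d$ sheets with orders $k_{(1)}\ge\dots\ge k_{(r)}$. Near $x$ we have $\pi^{-1}(U)\cong\bigsqcup_l U$, and on sheet $l$ the function $t$ equals $\zeta^l t_0$ with $t_0$ a unit. A section restricts to sheet $l$ as $\sum_q\zeta^{lq}t_0^q g_q$. I will solve for $g_0,\dots,g_{r-1}$ by Newton-type triangular elimination, using the nonvanishing of the Vandermonde determinant of the $\zeta^l$. The target is to show that it suffices to prescribe $g_q$ modulo $\mathfrak m_x^{k_{(q+1)}}$ for $q<r$, and to take $g_q=0$ for $q\ge r$. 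So $a_q(x)=k_{(q+1)}$.

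\emph{Counting and main obstacle.} For a fiber with $a_q(x)>0$, the $q$ sheets $k_{(1)},\dots,k_{(q)}$ each have order at least $k_{(q+1)}\ge1$. Hence $a_q(x)\le\sum_{\text{fiber}}k-q$. In the ramified case the deficit $q$ appears directly in $a_q(x)=k_i-q$. Moreover, a single fiber contributing the deficit $q$ already forces $\sum_x a_q(x)\le k+1-q$, which gives (b). The step I expect to be the main obstacle is the triangular solution in the unramified case with unequal orders. One has to show that the interpolation system, taken modulo the successive powers $\mathfrak m_x^{k_{(1)}}\supseteq\dots$, is solvable when each $g_q$ is only controlled up to order $k_{(q+1)}$. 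I would try first to induct on $r$: subtract a solution that matches the top $r-1$ sheets, then correct the remaining sheet with a term $\prod_{l<r}(t-\zeta^l t_0)\cdot h$. This term vanishes on the other sheets, and expanding it shows that it only affects $g_q$ for $q\le r-1$ and to the stated orders.
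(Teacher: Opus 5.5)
The paper only quotes this result from \cite{cyclic_higher_order_embed} and gives no proof, so your argument has to stand on its own. Your local orders $a_q(x)$ and your count $\sum_x a_q(x)\le k-q+1$ are correct, and so is the ramified case. There you prove a genuine factorization: if $g_q\in\mathfrak{m}_x^{\max(k_i-q,0)}$ then $t^qg_q\in\mathfrak{m}_{y_i}^{k_i}$, so the parts of $s_q$ that jet ampleness does not control are harmless. In the unramified case this factorization is false, and there is a genuine gap. Suppose the marked sheet of largest order $k_{(1)}$ is the one where $t=\zeta^{e}t_0$. There $t^qg_q$ restricts to a unit times $g_q$. So for the jets over $x$ to depend only on $g_q \bmod \mathfrak{m}_x^{a_q(x)}$, you would need $a_q(x)\ge k_{(1)}$ for every $q\le d-1$, which the hypotheses cannot supply. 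In particular you cannot ``take $g_q=0$ for $q\ge r$'': $s_q$ is a global section that another fibre may force to be nonzero, and then its germ at $x$ is arbitrary. Concretely, take $d=2$, $k=2$, a point $y_1$ over an unramified $x$ with $k_1=1$, and both preimages $y_2,y_3$ of another unramified $x'$ with $k_2=k_3=1$. Your recipe gives $Z_0=\mathfrak{m}_x\otimes\mathfrak{m}_{x'}$, $Z_1=\mathfrak{m}_{x'}$ and $s_0(x)=c_1$. In local trivializations the value at $y_1$ is $s_0(x)+t(y_1)s_1(x)$, and $s_1(x)$ is not controlled, so the prescribed value at $y_1$ is missed. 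The same problem occurs for $q<r$: the part of $g_q$ lying in $\mathfrak{m}_x^{k_{(q+1)}}$ is uncontrolled but still affects the sheets of larger order. Your correction term $\prod_{l<r}(t-\zeta^lt_0)h$ only shows that the local system has some solution. It does not deal with this global interaction, and that interaction, not local solvability, is the real obstacle.

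The repair is to choose the sections one after another, for $q=\min(k,d-1),\dots,1,0$ in this order, with $s_q=0$ for $q>k$. The jets you prescribe for $s_q$ along $Z_q$ are then allowed to depend on the already chosen $s_{q'}$ with $q'>q$. At an unramified fibre, work degree by degree, and let $e_l$ be the exponent with $t=\zeta^{e_l}t_0$ on the marked sheet of order $k_{(l)}$. The conditions in degree $j$ involve only the $r_j=\#\{l:k_{(l)}>j\}$ sheets of order $>j$. The unknowns are the degree-$j$ parts of $g_0,\dots,g_{r_j-1}$, and the coefficient matrix $(\zeta^{e_lq}t_0(x)^q)_{l\le r_j,\,q<r_j}$ is Vandermonde, hence invertible. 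The right-hand side involves the degree-$j$ parts of $g_{q'}$ with $q'\ge r_j$, together with lower-degree data. Since $r_j$ is non-increasing in $j$, every uncontrolled quantity entering the systems that determine $g_q$ has index $q'>q$. Each such quantity is therefore already fixed when you choose $s_q$, and $g_q$ needs to be prescribed exactly modulo $\mathfrak{m}_x^{k_{(q+1)}}$. Ramified fibres need no such ordering. With this sequential choice your $Z_q$ and your count go through unchanged and give the theorem.
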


% ========================================
% ★★★ SECTION 3: MAIN THEOREMS ★★★
% ========================================
\section{Main Theorems}
\label{sec:main}
% --- 3.1单点Jet分离理论的推广 ---
% • 建立Newton-Okounkov体与Jet分离的桥梁
% • 提供基于凸几何的Jet性质判断方法
% • 深化代数几何中局部性质的理解
\begin{theorem}[Canonical-Free Jet Separation]\label{thm:main_jet_separation}
Let $X$ be an $n$-dimensional smooth projective variety over $\mathbb{C}$, let $D$ be an ample divisor on $X$, and let $x \in X$ be a point. Let $\overline{X}$ be the blow-up of $X$ at $x$ with blowing-up map $\pi: \overline{X} \rightarrow X$, and let $\tilde{Y}_{\bullet}$ be an infinitesimal flag centered at $x$ with associated valuation $\nu_{\tilde{Y}_{\bullet}}$. There exists $m(D)$ (depending only on $D$) such that $m(D) \cdot D - K_X$ is ample, and for any positive integer $m$ such that 
$$\lltriangle{n+k+\varepsilon} \subseteq \Delta_{\tilde{Y}_{\bullet}}(m\pi^*(D))$$
for some $\varepsilon > 0$ and natural number $k$, the divisor $(m + m(D)) \cdot D$ separates $k$-jets at $x$.

\end{theorem}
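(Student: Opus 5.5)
The plan is to reduce the statement to the Küronya--Lozovanu criterion (\cite[Proposition 4.9]{küronya2015infinitesimalnewtonokounkovbodiesjet}) applied to the divisor $mD$, and then absorb the canonical divisor using the ample correction $m(D)D - K_X$. Since $D$ is ample and $K_X$ is fixed, ampleness is an open condition in $N^1(X)_{\mathbb{R}}$. Hence there is an integer $m(D)$ with $m(D)D - K_X$ ample, for instance any $m(D)$ large enough that $m(D)D-K_X$ lies in the ample cone. It depends only on $D$ (and $X$).

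Write
\[
(m+m(D))D = K_X + \bigl(mD + (m(D)D - K_X)\bigr) = K_X + D', \qquad D' := mD + A,\quad A := m(D)D-K_X .
\]
Next I would show that the hypothesis on $m\pi^*D$ passes to $\pi^*D'$. Fix an infinitesimal flag $\tilde{Y}_\bullet$ over $x$. Since $A$ is ample, some multiple $qA$ has a section $t$ not vanishing at $x$. For $s\in H^0(\ell m D)$ and $\ell$ divisible by $q$, the product $s\cdot t^{\ell/q}\in H^0(\ell D')$ satisfies $\nu_{\tilde{Y}_\bullet}(s t^{\ell/q})=\nu_{\tilde{Y}_\bullet}(s)$. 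This holds because $\pi^*t$ is a unit near the whole exceptional divisor, and in particular near the flag's points. Passing to the limit gives
\[
\Delta_{\tilde{Y}_\bullet}(m\pi^*D)\subseteq \Delta_{\tilde{Y}_\bullet}(\pi^*D'),
\]
using homogeneity of Okounkov bodies for $\mathbb{Q}$-divisors. Alternatively one can cite the general superadditivity $\Delta(D_1)+\Delta(D_2)\subseteq\Delta(D_1+D_2)$ together with $0\in\Delta_{\tilde{Y}_\bullet}(\pi^*A)$. The latter holds by \cref{thm:stable-base-locus-characterization}, since $x\notin \mathbf{B}_-(A)$ for ample $A$; the infinitesimal version is the analogous statement from \cite{küronya2015infinitesimalnewtonokounkovbodiesjet}. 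Either way,
\[
\Delta^{-1}_{n+k+\varepsilon}\subseteq \Delta_{\tilde{Y}_\bullet}(\pi^*D')
\]
for every infinitesimal flag over $x$.

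Applying \cite[Proposition 4.9]{küronya2015infinitesimalnewtonokounkovbodiesjet} to $D'$, which is ample and hence big, yields that $K_X+D'=(m+m(D))D$ separates $k$-jets at $x$.

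The main point requiring care is the monotonicity step. The ample correction $A$ must not shift the valuation vector, which is exactly why I choose a section nonvanishing at $x$ (or use $\mathbf{0}\in\Delta(\pi^*A)$). One must also check that the hypothesis "for every flag" is used flag-by-flag, so that the containment holds for all flags simultaneously, as the cited proposition requires. If one prefers to avoid the black box, the underlying argument is the Nadel vanishing proof: from the simplex containment, produce an effective $\mathbb{Q}$-divisor $\Gamma\equiv (1-\delta)mD$ with $\mathcal{J}(\Gamma)=\mathfrak{m}_x^{k+1}$ near $x$. Then apply Nadel vanishing to $K_X+D'-\Gamma\equiv K_X + \delta mD + A$, whose $K_X$-free part is ample, to get surjectivity of $H^0((m+m(D))D)\to H^0(\mathcal{O}/\mathfrak{m}_x^{k+1})$.
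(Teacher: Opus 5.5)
Your proposal is correct and follows essentially the same route as the paper. The paper (via its multipoint theorem with $N=1$) also splits $(m+m(D))D = K_X + (m(D)D-K_X) + mD$, transfers the simplex containment to $\pi^*\bigl((m+m(D))D-K_X\bigr)$ using superadditivity of Newton--Okounkov bodies together with $\mathbf{0}\in\Delta_{\tilde{Y}_\bullet}(\pi^*(m(D)D-K_X))$, and then applies the Küronya--Lozovanu criterion. Your section-nonvanishing-at-$x$ argument is a hands-on proof of that same monotonicity step, and the optional Nadel sketch is not needed; in it you would only need $\mathcal{J}(\Gamma)\subseteq\mathfrak{m}_x^{k+1}$ near $x$ with $x$ isolated in the cosupport, not equality.
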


\begin{proof}
This is a special case of Theorem \ref{thm:generalized_multipoint_jet_separation} with $N = 1$. The complete proof will be given.
\end{proof}

\begin{remark}\label{rem:converse_jet_separation}
Based on the proof technique of \cite[Proposition 4.10]{küronya2015infinitesimalnewtonokounkovbodiesjet}, we can derive a converse result: 

If $ D$ separates $k$-jets at $x$, then there exists $\varepsilon > 0$ such that
\[
\lltriangle{n+k} \subseteq \Delta_{\tilde{Y}_{\bullet}}(\pi^*(D)).
\]

This provides a partial converse to Theorem \ref{thm:main_jet_separation}, establishing a more complete correspondence between jet separation properties and the geometric structure of infinitesimal Newton-Okounkov bodies.
\end{remark}

The following theorem adopts the proof strategy from 
\cite[Proposition 4.9]{küronya2015infinitesimalnewtonokounkovbodiesjet}.

\begin{theorem}[Multipoint Jet Separation]
\label{prop:multipoint-NO-jet-separation}
Let $X$ be an $n$-dimensional smooth projective variety, $D$ a big Cartier divisor, and $Z = \{p_1, \ldots, p_N\}$ be a 0-cycle of distinct smooth points on $X$. Consider the blow-up $\pi: \overline{X} = \mathrm{Bl}_{Z}(X) \to X$ along $Z$. Assume that there exists a positive real number $\varepsilon$ and a natural number $k$ with the property that $\Delta_{n+k+\varepsilon}^{-1} \subseteq \Delta_{\tilde{Y}_\bullet,j}(\pi^*(D))$ for all $j = 1, \ldots, N$ and every tuple of infinitesimal flags $\tilde{Y}_\bullet = (\tilde{Y}^{(1)}_\bullet, \ldots, \tilde{Y}^{(N)}_\bullet)$ with $\tilde{Y}^{(i)}_\bullet$ over $p_i$. Then $K_X + D$ separates $k$-jets at the points $p_1, \ldots, p_N$ simultaneously.
\end{theorem}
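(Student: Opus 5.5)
We follow the strategy of \cite[Proposition 4.9]{küronya2015infinitesimalnewtonokounkovbodiesjet}, but on the simultaneous blow-up $\pi:\overline{X}=\mathrm{Bl}_Z(X)\to X$ with exceptional divisors $E_1,\dots,E_N$. Since $\mathfrak{m}_{p_1}^{k+1}\cdots\mathfrak{m}_{p_N}^{k+1}$ is the ideal of a zero-dimensional subscheme supported on the distinct points $p_j$, simultaneous separation of $k$-jets at $p_1,\dots,p_N$ amounts to the vanishing
\[
H^1\bigl(X,\mathcal{O}_X(K_X+D)\otimes \mathfrak{m}_{p_1}^{k+1}\cdots\mathfrak{m}_{p_N}^{k+1}\bigr)=0 .
\]
We obtain this from Nadel vanishing: construct an effective $\mathbb{Q}$-divisor $B\equiv \lambda D$ with $\lambda<1$, so that $D-B$ is big and nef, whose multiplier ideal $\mathcal{J}(B)$ coincides with $\mathfrak{m}_{p_j}^{k+1}$ near each $p_j$ and is trivial in a punctured neighbourhood of each $p_j$. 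If $\mathcal{J}(B)$ has extra cosupport away from $Z$, we pass to the ideal $\mathcal{J}(B)=\mathcal{I}\cdot\prod_j\mathfrak{m}_{p_j}^{k+1}$ and use the surjectivity of the restriction map to the disjoint summands. Concretely, vanishing of $H^1(\mathcal{O}(K_X+D)\otimes\mathcal{J}(B))$ together with the fact that $\mathcal{O}/\mathcal{J}(B)$ splits as a product of the local pieces at $p_j$ and elsewhere gives surjectivity onto $\bigoplus_j \mathcal{O}_{X,p_j}/\mathfrak{m}_{p_j}^{k+1}$.

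\textbf{Step 1: construct divisors from the multipoint bodies.} Fix $j$. By hypothesis every vertex $(n+k+\varepsilon)(e_1+\dots+e_i)$ of $\Delta^{-1}_{n+k+\varepsilon}$ lies in $\Delta_{\tilde{Y}_\bullet,j}(\pi^*D)$, for every choice of flags. Using Lemma~\ref{lem:InclusionInSinglePointBody} and, more importantly, density of valuative points of $V_{\ell,j}$, we find for $\ell\gg0$ sections $s\in V_{\ell,j}\subset H^0(X,\ell D)$ with $\nu_{\tilde{Y}^{(j)}_\bullet}(s)/\ell$ close to a point of the form $(n+k+\varepsilon')(e_1+\dots+e_i)$, say with $\varepsilon'<\varepsilon$. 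Since $s\in V_{\ell,j}$, the valuations at the other points are strictly larger, hence also large. Following Küronya–Lozovanu, the point $(n+k+\varepsilon')e_1$ together with the freedom to vary the infinitesimal flag over $p_j$ (all tangent directions) gives sections whose lowest-order term at $p_j$ has order $\ge (n+k+\varepsilon')\ell$. Moreover the Remark~\ref{rem:infinitesimal_valuation} description of the remaining coordinates lets us control the tangent cone, ruling out concentration of the singularity along a proper subvariety of $E_j$. Taking a general combination $B_j=\frac{1}{\ell(1+\delta)}\operatorname{div}(s)$-type $\mathbb{Q}$-divisors over finitely many flags and summing with weights, we obtain $B\equiv cD$, $c<1$, with $\operatorname{mult}_{p_j}B\ge n+k$ at every $j$ and with isolated multiplier-ideal behaviour at $p_j$, so that $\mathcal{J}(B)_{p_j}\subseteq\mathfrak{m}_{p_j}^{k+1}$ with $p_j$ an isolated point of the cosupport. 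Then $\operatorname{mult}\ge n+k$ gives $\mathcal{J}\subseteq\mathfrak{m}^{k+1}$ via $\operatorname{ord}_{E_j}(B)-(n-1)\ge k+1$.

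\textbf{Step 2: Nadel vanishing and the standard perturbation.} To get equality rather than inclusion locally, we use the usual trick of \cite[Proposition 4.9]{küronya2015infinitesimalnewtonokounkovbodiesjet}. Replace $B$ by $B'=(1-t)B + \frac{n+k}{\ldots}$-type combination, or more cleanly apply Nadel to $\mathcal{J}(B)$, and then note that the map $\mathcal{O}/\mathcal{J}(B)\to \mathcal{O}/\mathfrak{m}_{p_j}^{k+1}$ is surjective at $p_j$. Surjectivity of $H^0(K_X+D)\to H^0(\mathcal{O}/\mathcal{J}(B))$ then composes to surjectivity onto the $k$-jets at all $p_j$ simultaneously, because the zero-dimensional components at distinct $p_j$ split as a direct sum.

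\textbf{Main obstacle.} The hard step is Step 1 in the multipoint setting: ensuring that the divisor built for $p_j$ does not create non-isolated or excess singularities through the other points, and that the components are isolated. The condition $s\in V_{\ell,j}$ only compares valuation vectors lexicographically, and does not a priori bound multiplicity elsewhere. I would handle this by summing the divisors $B_1,\dots,B_N$ with weights $1/N$-type only after checking that the containment at each $j$ gives multiplicity at least $n+k+\varepsilon$ at $p_j$ for \emph{every} summand. Indeed, $s\in V_{\ell,i}$ forces $\nu$ at $p_j$ to be larger than at $p_i$, which is itself at least about $(n+k)\ell$. Consequently each $B_i$ individually has multiplicity $\ge n+k$ at all points, and we may simply use one of them, or an average, while keeping $c<1$. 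Isolatedness at $p_j$ is then obtained by the general-flag argument, using the full simplex (not just its vertex $e_1$) to move the tangent cone.
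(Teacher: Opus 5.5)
\textbf{There is a genuine gap at the step you yourself call the main obstacle.} Your route is to build an explicit effective $\mathbb{Q}$-divisor $B\equiv\lambda D$ on $X$ with $\operatorname{mult}_{p_j}B\ge n+k$, then apply ordinary Nadel vanishing on $X$. Neither Step 1 nor Step 2 closes the gap.

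From $\mathcal{J}(B)\subseteq\mathfrak{m}_{p_j}^{k+1}$ you get surjectivity onto $k$-jets only if $p_j$ is an isolated point of $\operatorname{Zeroes}(\mathcal{J}(B))$. Otherwise $H^0(\mathcal{O}_X/\mathcal{J}(B))\to\mathcal{O}_{X,p_j}/\mathfrak{m}_{p_j}^{k+1}$ need not be surjective.

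The hypothesis, read one section at a time, gives no such control:
\begin{itemize}
\item A section $s\in V_{\ell,j}$ with $\nu_1(s)\ge(n+k+\varepsilon')\ell$ may vanish to that order along an entire curve through $p_j$. Then $\mathcal{J}(\tfrac{\lambda}{\ell}\operatorname{div}s)$ is nontrivial along that curve, and averaging finitely many such divisors does not remove a common component.
\item The remaining coordinates of $\nu_{\tilde Y^{(j)}_\bullet}$ only record the lexicographically minimal monomial of the lowest-degree term (Remark~\ref{rem:infinitesimal_valuation}). So ``using the full simplex to move the tangent cone'' says nothing about where the higher-order vanishing sits.
\item Your observation that $s\in V_{\ell,i}$ forces $\operatorname{mult}_{p_j}\ge\operatorname{mult}_{p_i}$ is correct. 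But it gives multiplicity at the other points, not isolatedness.
\end{itemize}

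There is also a separate problem. The statement only assumes $D$ big, so $D-B\equiv(1-\lambda)D$ need not be nef. The $\mathbb{Q}$-divisor form of Nadel vanishing you invoke is then unavailable.

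\textbf{The missing idea is to control a restricted base locus on the blow-up instead of constructing a divisor at all; this is how the paper argues.}
\begin{enumerate}
\item Put $B=\pi^*D-(n+k)\mathbb{E}$ on $\overline X$, where $\mathbb{E}=\sum_iE_i$.
\item The segment $[\mathbf{0},(n+k+\varepsilon)e_1]$ lies in the simplex, and $\varepsilon>0$ keeps $t=n+k$ in the range of validity of the slice formula (Theorem~\ref{thm:okounkov-body-slice}). So the origin lies in the multipoint body of $B$ at $p_j$.
\item By Lemma~\ref{lem:InclusionInSinglePointBody}, the origin then lies in $\Delta_{\tilde Y^{(j)}_\bullet}(B)$, for every infinitesimal flag over every $p_j$.
\item Theorem~\ref{thm:stable-base-locus-characterization} turns this into $\mathbf{B}_-(B)\cap E_j=\varnothing$ for all $j$. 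Hence $\mathcal{J}(\overline X,\|B\|)$ is trivial near $\mathbb{E}$. This is exactly the isolatedness you were trying to build by hand, and it holds at all $N$ points at once.
\item Since $K_{\overline X}+B=\pi^*(K_X+D)-(k+1)\mathbb{E}$ and $B$ is big, Nadel vanishing for asymptotic multiplier ideals gives $H^1\bigl(\overline X,\mathcal{O}_{\overline X}(K_{\overline X}+B)\otimes\mathcal{J}(\overline X,\|B\|)\bigr)=0$.
\item The quotient of $\mathcal{O}_{\overline X}(\pi^*(K_X+D))$ by this subsheaf splits as a piece supported away from $\mathbb{E}$ plus $\bigoplus_i\mathcal{O}_{(k+1)E_i}(\pi^*(K_X+D))$.
\item Finally, $\pi_*\mathcal{O}_{(k+1)E_i}=\mathcal{O}_X/\mathfrak{m}_{p_i}^{k+1}$, which yields the simultaneous $k$-jets.
\end{enumerate}
Only the edge $[\mathbf{0},(n+k+\varepsilon)e_1]$ of the simplex is used. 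The other vertices you were trying to exploit play no role.
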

\begin{proof}
\textbf{Strategy.} We reduce the multipoint jet separation problem to showing surjectivity of a restriction map on the blow-up. Using the slice formula for multipoint Newton-Okounkov bodies and the assumption on simplex inclusions, we verify that an auxiliary divisor $B$ has the origin in all relevant infinitesimal Newton-Okounkov bodies. This ensures the asymptotic base locus avoids the exceptional divisors, allowing us to apply Nadel vanishing to conclude surjectivity.

\textbf{Step 1: Reformulation as a restriction map.}
What we need to prove is that the restriction map
\[
\begin{aligned}
H^0(X, \mathcal{O}_X(K_X + D)) \longrightarrow &H^0\left(X, \mathcal{O}_X(K_X + D) \otimes \mathcal{O}_{X}/(\otimes_{i=1}^{N} \mathfrak{m}_{X,p_i}^{k+1})\right)\\
&=\bigoplus_{i=1}^{N} H^0\left(X, \mathcal{O}_X(K_X + D) \otimes \mathcal{O}_{X,p_i}/\mathfrak{m}_{X,p_i}^{k+1}\right)
\end{aligned}
\]
is surjective.

\textbf{Step 2: Transfer to the blow-up.}
Transferring the question to the blow-up $\overline{X}$, where $E_i$ is the exceptional divisor over $p_i$ and $\mathbb{E}=\sum_{i=1}^NE_i$, this is equivalent to requiring
\begin{equation}\label{eq:restriction-map}
H^0(\overline{X}, \mathcal{O}_{\overline{X}}(\pi^*(K_X + D))) \longrightarrow H^0\left(\overline{X}, \mathcal{O}_{\overline{X}}(\pi^*(K_X + D)) \otimes \mathcal{O}_{\overline{X}}/(\otimes_{i=1}^{N}\mathcal{O}_{\overline{X}}(-(k+1)E_i)))\right)
\end{equation}
to be surjective.

\textbf{Step 3: Construction of auxiliary divisor $B$.}
Let us write $B \stackrel{\text{def}}{=} \pi^*(D) - (n+k)\left(\sum_{i=1}^NE_i\right)$. By \cref{thm:okounkov-body-slice}, we have
\[
\Delta_{\tilde{Y}^{(j)}_\bullet}(B) = \Delta_{\tilde{Y}^{(j)}_\bullet}(\pi^*(D))_{x_0 \geqslant n+k} - (n+k, 0, \ldots, 0),\quad \forall j=1,\ldots,N
\]
for any infinitesimal flag $\tilde{Y}^{(j)}_\bullet$ over the point $p_j$.

\textbf{Step 4: Verification that the origin lies in $\Delta_{\tilde{Y}^{(j)}_\bullet}(B)$.}
The assumption $\Delta_{n+k+\varepsilon}^{-1} \subseteq \Delta_{\tilde{Y}_\bullet,j}(\pi^*(D))$ with $\varepsilon > 0$ ensures that $(n+k, 0, \ldots, 0) \in \Delta_{\tilde{Y}_\bullet,j}(\pi^*(D))$. By \cref{lem:InclusionInSinglePointBody}, $\Delta_{\tilde{Y}_\bullet,j}(\pi^*(D)) \subseteq \Delta_{\tilde{Y}^{(j)}_\bullet}(\pi^*(D))$ for any infinitesimal flag $\tilde{Y}^{(j)}_\bullet$ at $p_j$. Therefore $(n+k, 0, \ldots, 0) \in \Delta_{\tilde{Y}^{(j)}_\bullet}(\pi^*(D))$, and after the translation by $-(n+k, 0, \ldots, 0)$ in the slice formula, we obtain $\mathbf{0} \in \Delta_{\tilde{Y}^{(j)}_\bullet}(B)$ for all $j$. In particular, $B$ is a big line bundle with the property that the origin $\mathbf{0} \in \Delta_{\tilde{Y}^{(j)}_\bullet}(B)$ for any infinitesimal flag $\tilde{Y}^{(j)}_\bullet$ over $p_j$.

\textbf{Step 5: Asymptotic base locus avoids exceptional divisors.}
As a consequence of \cref{thm:stable-base-locus-characterization}, we obtain that $\mathbf{B}_-(B) \cap E_j = \varnothing$ for all $j=1,\ldots,N$. Thus 
\[
\mathrm{Zeroes}(\mathscr{J}(\overline{X}, ||B||)) \cap E_j = \varnothing,\quad \forall j=1,\ldots,N.
\]

\textbf{Step 6: Application of Nadel vanishing.}
Recall that $B = \pi^* D - (n+k)\left(\sum_{i=1}^NE_i\right)$, and $K_{\overline{X}} = \pi^* K_X + (n-1)\left(\sum_{i=1}^NE_i\right)$, therefore we have the short exact sequence
\[
0 \to \mathcal{O}_{\overline{X}}(K_{\overline{X}} + B) \otimes \mathscr{J}(\overline{X}, ||B||) \to \mathcal{O}_{\overline{X}}(\pi^*(K_X + D)) \to \mathcal{O}_{\overline{X}}(\pi^*(K_X + D)) \otimes \left(\mathscr{Z} \oplus \bigoplus_{i=1}^{N}\mathcal{O}_{(k+1)E_i}\right) \to 0,
\]
where $\mathscr{Z}$ stands for the structure sheaf determined by the closed subscheme associated to the ideal $\mathscr{J}(\overline{X}, ||B||)$; note that this latter has support disjoint from $\mathbb{E}$.

Since $B$ is a big line bundle, by Nadel's vanishing for asymptotic multiplier ideals we have
\[
H^1(\overline{X}, \mathcal{O}_{\overline{X}}(K_{\overline{X}} + B) \otimes \mathscr{J}(\overline{X}, ||B||)) = 0.
\]

\textbf{Step 7: Surjectivity of the restriction map.}
Therefore the restriction map
\[
H^0(\overline{X}, \mathcal{O}_{\overline{X}}(\pi^*(K_X + D))) \longrightarrow H^0\left(\overline{X}, \mathcal{O}_{\overline{X}}(\pi^*(K_X + D)) \otimes \left(\mathscr{Z} \oplus \bigoplus_{i=1}^{N}\mathcal{O}_{(k+1)E_i}\right)\right)
\]
is surjective, but then so is
\[
H^0(\overline{X}, \mathcal{O}_{\overline{X}}(\pi^*(K_X + D))) \longrightarrow H^0\left(\overline{X}, \mathcal{O}_{\overline{X}}(\pi^*(K_X + D)) \otimes \left(\bigoplus_{i=1}^{N}\mathcal{O}_{(k+1)E_i}\right)\right),
\]
as required.
\end{proof}

\begin{theorem}[Canonical-Free Multipoint Jet Separation]
\label{thm:generalized_multipoint_jet_separation}
Let $X$ be an $n$-dimensional smooth projective variety over $\mathbb{C}$, let $D$ be an ample divisor on $X$, and let $Z = \{p_1, \ldots, p_N\}$ be a 0-cycle of distinct smooth points on $X$. Consider the blow-up $\pi: \overline{X} = \mathrm{Bl}_{Z}(X) \to X$ along $Z$. 

There exists $m(D)$ (depending only on $D$) such that $m(D) \cdot D - K_X$ is ample. For any positive integer $m$ with the property that 
$$\Delta_{n+k+\varepsilon}^{-1} \subseteq \Delta_{\tilde{Y}_\bullet,j}(m\pi^*(D))$$
for some $\varepsilon > 0$, natural number $k$, and for all $j = 1, \ldots, N$ and every tuple of infinitesimal flags $\tilde{Y}_\bullet = (\tilde{Y}^{(1)}_\bullet, \ldots, \tilde{Y}^{(N)}_\bullet)$ with $\tilde{Y}^{(i)}_\bullet$ centered at $p_i$, 
the divisor $(m + m(D)) \cdot D$ separates $k$-jets at the points $p_1, \ldots, p_N$ simultaneously.
\end{theorem}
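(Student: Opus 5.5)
The plan is to deduce the statement from \cref{prop:multipoint-NO-jet-separation} by absorbing the canonical divisor into an ample correction term, and then to rerun Steps 3--7 of that proof with the positivity of the correction feeding into Nadel vanishing.

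\textbf{Choice of $m(D)$.} Since $D$ is ample and $K_X$ is a fixed divisor, Serre-type boundedness gives some integer $m(D)$ with $A:=m(D)D-K_X$ ample. For instance, take $m(D)$ with $m(D)D-K_X$ lying in the interior of the ample cone, which is possible because $D$ lies in the interior of the nef cone. This depends only on $D$ and $X$. We then write
\[
(m+m(D))D = K_X + \bigl(mD + A\bigr).
\]
So it suffices to show that $K_X+(mD+A)$ separates $k$-jets at $p_1,\dots,p_N$ simultaneously.

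\textbf{Transfer to the blow-up.} As in Steps 1--2, this amounts to surjectivity of
\[
H^0\bigl(\overline{X},\pi^*(K_X+mD+A)\bigr)\to \bigoplus_i H^0\bigl(\pi^*(K_X+mD+A)\otimes\mathcal{O}_{(k+1)E_i}\bigr).
\]

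\textbf{The divisor $B$.} Set $B:=\pi^*(mD)-(n+k)\mathbb{E}$. Steps 3--5 apply verbatim with $D$ replaced by $mD$. The hypothesis $\Delta^{-1}_{n+k+\varepsilon}\subseteq\Delta_{\tilde{Y}_\bullet,j}(m\pi^*D)$, together with \cref{lem:InclusionInSinglePointBody} and the slice formula \cref{thm:okounkov-body-slice}, gives $\mathbf{0}\in\Delta_{\tilde{Y}^{(j)}_\bullet}(B)$ for all infinitesimal flags over each $p_j$. \Cref{thm:stable-base-locus-characterization} then gives $\mathbf{B}_-(B)\cap E_j=\varnothing$, so the zero locus of $\mathscr{J}(\overline{X},\|B\|)$ is disjoint from $\mathbb{E}$.

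\textbf{Nadel vanishing with the ample twist.} Using $K_{\overline{X}}=\pi^*K_X+(n-1)\mathbb{E}$, we get
\[
\pi^*(K_X+mD+A)-(k+1)\mathbb{E}=K_{\overline{X}}+B+\pi^*A .
\]
This yields the short exact sequence of Step 6 with kernel $\mathcal{O}(K_{\overline{X}}+B+\pi^*A)\otimes\mathscr{J}(\|B\|)$. Since $B$ is big and $\pi^*A$ is nef, $B+\pi^*A$ is big. Moreover $\mathscr{J}(\|B\|)\subseteq\mathscr{J}(\|B+\pi^*A\|)$, and the relevant vanishing still holds: Nadel vanishing for asymptotic multiplier ideals allows a nef and big twist, giving $H^i(K_{\overline{X}}+B+P\otimes\mathscr{J}(\|B\|))=0$ for $P$ nef. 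If needed, one can instead apply vanishing directly to $\mathscr{J}(\|B+\pi^*A\|)$. Its zero locus is contained in $\mathbf{B}_-(B+\pi^*A)\subseteq\mathbf{B}_-(B)$, because adding a nef class does not enlarge the restricted base locus. That locus is therefore still disjoint from $\mathbb{E}$. Surjectivity then follows exactly as in Step 7 by projecting away from the $\mathscr{Z}$-summand.

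\textbf{Main obstacle.} The main subtlety is this last vanishing step: confirming that the twist by the nef but non-ample class $\pi^*A$ on the blow-up is compatible with the asymptotic multiplier ideal. I would handle it via the second route. Namely, I would use $\mathbf{B}_-(B+\pi^*A)\subseteq\mathbf{B}_-(B)$, which follows since $B+\pi^*A+\text{(small ample)}$ contains $B+\text{(small ample)}$ plus a nef class, and then apply standard Nadel vanishing to the big divisor $B+\pi^*A$.
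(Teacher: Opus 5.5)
Your proof is correct, but it is organized differently from the paper's.

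\textbf{The paper's route.} It does not return to Nadel vanishing. It uses superadditivity $\Delta_{\tilde{Y}^{(j)}_\bullet}(\pi^*A+m\pi^*D)\supseteq\Delta_{\tilde{Y}^{(j)}_\bullet}(\pi^*A)+\Delta_{\tilde{Y}^{(j)}_\bullet}(m\pi^*D)$ together with $\mathbf{0}\in\Delta_{\tilde{Y}^{(j)}_\bullet}(\pi^*A)$, where $A=m(D)D-K_X$ is ample. This pushes $\Delta^{-1}_{n+k+\varepsilon}$ into the bodies of $(m+m(D))\pi^*D-\pi^*K_X$. It then applies \cref{prop:multipoint-NO-jet-separation} to that divisor as a black box.

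\textbf{Your route.} You keep the Newton--Okounkov input attached to $mD$ alone and let $A$ enter only cohomologically, via $K_{\overline{X}}+B+\pi^*A=\pi^*(K_X+mD+A)-(k+1)\mathbb{E}$. Both of your vanishing arguments are sound. Asymptotic Nadel vanishing gives $H^i(\overline{X},\mathcal{O}_{\overline{X}}(K_{\overline{X}}+L+P)\otimes\mathscr{J}(\|L\|))=0$ for $L$ big and $P$ nef. The inclusion $\mathbf{B}_-(B+\pi^*A)\subseteq\mathbf{B}_-(B)$ follows from $B+\pi^*A+H=(B+\tfrac{1}{2}H)+(\pi^*A+\tfrac{1}{2}H)$. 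Make the split explicit: the second summand is then ample, whereas "plus a nef class" alone does not control base loci.

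\textbf{What your route buys.} It sidesteps the paper's Step 6. There, containment in the single-point bodies $\Delta_{\tilde{Y}^{(j)}_\bullet}$ is upgraded to the containment in the multipoint bodies $\Delta_{\tilde{Y}_\bullet,j}$ that \cref{prop:multipoint-NO-jet-separation} needs. But \cref{lem:InclusionInSinglePointBody} runs the other way, so that upgrade needs its own argument. One such argument is superadditivity of the multipoint bodies themselves, which holds since $V_{k,j}(L_1)\cdot V_{k,j}(L_2)\subseteq V_{k,j}(L_1+L_2)$, combined with $\mathbf{0}\in\Delta_{\tilde{Y}_\bullet,j}(\pi^*A)$. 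Your argument also uses only that $m(D)D-K_X$ is nef. The paper needs it big in order to have a body containing the origin. Once repaired, the paper's route has the merit of staying entirely within convex geometry.

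\textbf{One caution.} When you cite Steps 3--5 ``verbatim'', apply \cref{thm:okounkov-body-slice} to the multipoint body $\Delta_{\tilde{Y}_\bullet,j}(m\pi^*D)$. That is where subtracting all of $(n+k)\mathbb{E}$ is legitimate, and it gives $\mathbf{0}\in\Delta_{\tilde{Y}_\bullet,j}(B)$. Only then pass to $\Delta_{\tilde{Y}^{(j)}_\bullet}(B)$ by \cref{lem:InclusionInSinglePointBody}; a single-point slice removes only $(n+k)E_j$.
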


\begin{proof}
\textbf{Strategy.} Combining the techniques from the previous two theorems, we first construct an auxiliary ample divisor $m(D) \cdot D - K_X$, then use the subadditivity of Newton-Okounkov bodies and the property that ample divisor bodies contain the origin to translate the assumption on single-point bodies to inclusions for multipoint bodies, and finally apply Nadel vanishing to obtain jet separation.

\textbf{Step 1: Construction of auxiliary ample divisor.}
Since $X$ is a projective variety, $D$ is an ample Cartier divisor, and $K_X$ is a Cartier divisor, by \cite[Example 1.2.10]{lazasfield2004}, there exists $m(D)$ such that $m(D) \cdot D - K_X$ is ample. We decompose
\[
(m + m(D)) \cdot D = K_X + \big(m(D) \cdot D - K_X\big) + mD.
\]

\textbf{Step 2: Subadditivity of Newton-Okounkov bodies.}
For each $j = 1, \ldots, N$ and any infinitesimal flag $\tilde{Y}^{(j)}_\bullet$ at $p_j$, by \cite[Corollary 4.12]{ASENS_2009_4_42_5_783_0}, we have
\[
\Delta_{\tilde{Y}^{(j)}_\bullet}((m(D) \cdot \pi^*(D) - \pi^*(K_X))+m \cdot \pi^*(D)) \supseteq \Delta_{\tilde{Y}^{(j)}_\bullet}(m(D) \cdot \pi^*(D) - \pi^*(K_X))+\Delta_{\tilde{Y}^{(j)}_\bullet}(m \cdot \pi^*(D)).
\]

\textbf{Step 3: Ample divisor body contains the origin.}
Since $m(D) \cdot D - K_X$ is ample, by \cite[Corollary 4.2]{küronya2015infinitesimalnewtonokounkovbodiesjet} and \cite[Definition 2.5]{küronya2015infinitesimalnewtonokounkovbodiesjet}, for any infinitesimal flag $\tilde{Y}^{(j)}_\bullet$ at $p_j$, we have
\[
\mathbf{0} \in \Delta_{\tilde{Y}^{(j)}_\bullet}(m(D) \cdot \pi^*(D) - \pi^*(K_X)).
\]

\textbf{Step 4: From single-point bodies to multi-point bodies.}
The assumption gives $\Delta_{n+k+\varepsilon}^{-1} \subseteq \Delta_{\tilde{Y}_\bullet,j}(m\pi^*(D))$ for all $j = 1, \ldots, N$.

By \cref{lem:InclusionInSinglePointBody}, for any infinitesimal flag $\tilde{Y}^{(j)}_\bullet$ at $p_j$:
\[
\Delta_{\tilde{Y}_\bullet,j}(m\pi^*(D)) \subseteq \Delta_{\tilde{Y}^{(j)}_\bullet}(m\pi^*(D)).
\]

Therefore:
\[
\Delta_{n+k+\varepsilon}^{-1} \subseteq \Delta_{\tilde{Y}^{(j)}_\bullet}(m\pi^*(D)), \quad \forall j = 1, \ldots, N.
\]

\textbf{Step 5: Translation by the origin yields inclusion.}
For any point $y \in \Delta_{n+k+\varepsilon}^{-1}$, by Steps 3 and 4:
\[
y + \mathbf{0} = y \in \Delta_{\tilde{Y}^{(j)}_\bullet}(m(D) \cdot \pi^*(D) - \pi^*(K_X))+\Delta_{\tilde{Y}^{(j)}_\bullet}(m \cdot \pi^*(D)).
\]

Therefore, for all $j = 1, \ldots, N$:
\[
\Delta_{n+k+\varepsilon}^{-1} \subseteq \Delta_{\tilde{Y}^{(j)}_\bullet}(m(D) \cdot \pi^*(D) - \pi^*(K_X))+\Delta_{\tilde{Y}^{(j)}_\bullet}(m \cdot \pi^*(D)).
\]

\textbf{Step 6: Combining with subadditivity.}
By Steps 2 and 5, for all $j = 1, \ldots, N$ and any infinitesimal flag $\tilde{Y}^{(j)}_\bullet$ at $p_j$:
\[
\Delta_{n+k+\varepsilon}^{-1} \subseteq \Delta_{\tilde{Y}^{(j)}_\bullet}((m+m(D)) \cdot \pi^*(D) - \pi^*(K_X)).
\]

This holds for all tuples of infinitesimal flags $\tilde{Y}_\bullet = (\tilde{Y}^{(1)}_\bullet, \ldots, \tilde{Y}^{(N)}_\bullet)$, and therefore also for the multipoint bodies:
\[
\Delta_{n+k+\varepsilon}^{-1} \subseteq \Delta_{\tilde{Y}_\bullet,j}((m+m(D)) \cdot \pi^*(D) - \pi^*(K_X)), \quad \forall j = 1, \ldots, N.
\]

\textbf{Step 7: Application of multipoint jet separation.}
Since $(m+m(D)) \cdot D - K_X$ is ample (hence big), and we have verified that
\[
\Delta_{n+k+\varepsilon}^{-1} \subseteq \Delta_{\tilde{Y}_\bullet,j}((m+m(D)) \cdot \pi^*(D) - \pi^*(K_X))
\]
holds for all $j = 1, \ldots, N$, by \cref{prop:multipoint-NO-jet-separation}, the divisor $K_X + ((m+m(D)) \cdot D - K_X) = (m+m(D)) \cdot D$ separates $k$-jets at the points $p_1, \ldots, p_N$ simultaneously.
\end{proof}

\section{Example: Double Cover of Product of Elliptic Curves}
\label{sec:example}

We use $\ell\geq 2$ for the Koll\'ar--Lazarsfeld parameter in this example. This avoids confusion with the dimension variable in \Cref{thm:main_jet_separation}. In the application below, the variety is the surface $Y$, so $d:=\dim Y=2$; very ampleness corresponds to the jet order $k=1$. We write $s$ for the positive integer denoted by $m$ in the theorem.

\subsection{Setup and ampleness}
Let $S=E\times E$, where $E$ is an elliptic curve. Let $F_1,F_2$ be the two fiber classes and let $\Delta$ be the diagonal. The intersection numbers are
\[
F_1^2=F_2^2=\Delta^2=0,
\qquad
F_1\cdot F_2=F_1\cdot\Delta=F_2\cdot\Delta=1.
\]
Define
\[
A_\ell=\ell F_1+(\ell^2-\ell+1)F_2-(\ell-1)\Delta.
\]
Put $H=F_1+F_2$. Since
\[
A_\ell^2=2,
\qquad
A_\ell\cdot H=\ell^2-2\ell+3>0,
\]
and on an abelian surface the ample cone is the component of the positive cone containing $H$, the divisor $A_\ell$ is ample \cite[Examples~1.5.4 and~1.5.7]{lazasfield2004}.

Set $R=F_1+F_2$, choose a smooth divisor $B\in |2R|$, and let
\[
f:Y\to S
\]
be the double cover branched along $B$. Define $D_\ell=f^*A_\ell$. Since $f$ is finite, $D_\ell$ is ample. Equivalently, by Nakai--Moishezon, if $\Gamma\subset Y$ is irreducible and $C=f(\Gamma)$, then
\[
f_*\Gamma=eC \quad (e=1 \text{ or }2),
\qquad
(f^*L)\cdot\Gamma=L\cdot f_*\Gamma=e(L\cdot C).
\]
Together with $(f^*L)^2=2L^2$, this shows that $f^*L$ is ample on $Y$ if and only if $L$ is ample on $S$.

\subsection{\texorpdfstring{Computation of $m(D_\ell)$}{Computation of m(D ell)}}
The first condition in \Cref{thm:main_jet_separation} is
\[
m(D_\ell)D_\ell-K_Y \quad \text{ample}.
\]
The double-cover formula gives
\[
K_Y=f^*(K_S+R)=f^*R,
\]
since $K_S=0$. For an integer $q$, set $L_q=qA_\ell-R$. Then
\[
qD_\ell-K_Y=f^*L_q,
\]
so $qD_\ell-K_Y$ is ample if and only if $L_q$ is ample on $S$.

Let
\[
N_\ell:=A_\ell\cdot R=\ell^2-2\ell+3.
\]
Using the abelian-surface criterion with the ample class $A_\ell$, $L_q$ is ample if and only if
\[
L_q\cdot A_\ell=2q-N_\ell>0,
\qquad
L_q^2=2q^2-2N_\ell q+2>0.
\]
Equivalently,
\[
q>\frac{N_\ell+\sqrt{N_\ell^2-4}}{2}.
\]
Since $N_\ell\geq 3$, this real number lies strictly between $N_\ell-1$ and $N_\ell$. Hence the minimal integer is
\[
\boxed{m(D_\ell)=N_\ell=\ell^2-2\ell+3.}
\]

\subsection{The Newton--Okounkov condition and the resulting coefficient}
The second condition in \Cref{thm:main_jet_separation} is the inverted-simplex containment
\[
\Delta_{d+k+\varepsilon}^{-1}
\subseteq
\Delta_{\widetilde Y_\bullet}(s\pi_y^*D_\ell)
\]
for every infinitesimal flag over the point. Here $d=2$ and $k=1$, so the required size is $3+\varepsilon$.

The relevant convex-body size is controlled by the infinitesimal Newton--Okounkov body. For any $y\in Y$, put $x=f(y)$. The projection formula above, together with the local multiplicity inequality for finite maps, gives
\[
\varepsilon(D_\ell;y)\geq \varepsilon(A_\ell;x).
\]
On the abelian surface $S$, one has
\[
\varepsilon(A_\ell;x)=1:
\]
the translate of $F_2$ through $x$ gives the upper bound $A_\ell\cdot F_2=1$, while every ample integral line bundle on an abelian variety has Seshadri constant at least $1$ \cite[Example~5.3.10]{lazasfield2004}. By the surface theorem of K\"uronya--Lozovanu \cite{küronya2015infinitesimalnewtonokounkovbodiesjet}, this gives
\[
\Delta_1^{-1}\subseteq \Delta_{\widetilde Y_\bullet}(\pi_y^*D_\ell),
\qquad
\Delta_s^{-1}\subseteq \Delta_{\widetilde Y_\bullet}(s\pi_y^*D_\ell)
\]
for every infinitesimal flag. Therefore, the preceding lower-bound estimate guarantees that the strict condition
\[
\Delta_{3+\varepsilon}^{-1}
\subseteq
\Delta_{\widetilde Y_\bullet}(s\pi_y^*D_\ell)
\]
holds for \(s=4\) and for some \(\varepsilon>0\). Combining this with the value of $m(D_\ell)$, the coefficient in the conclusion of \Cref{thm:main_jet_separation} is
\[
s+m(D_\ell)=4+(\ell^2-2\ell+3)=\ell^2-2\ell+7.
\]
Thus the theorem verifies $1$-jet separation for
\[
\boxed{(\ell^2-2\ell+7)D_\ell.}
\]

\end{document}